\numberwithin{equation}{section}
\newtheorem{Theorem}{Theorem}[section]
\newtheorem{Lemma}[Theorem]{Lemma}
\newtheorem{Proposition}[Theorem]{Proposition}
\newtheorem{Definition}[Theorem]{Definition}
 \def\p{\partial} \def\nb{\nonumber}
\def \Vh0{\stackrel{\circ}{V}_h} \def\to{\rightarrow}
\def\Om{\Omega}   
\newcommand{\q}{\quad}   \newcommand{\qq}{\qquad}
\def\R{{\mathbb R}}
\def\Z{{\mathbb{Z}}}
\def\l{\label}  \def\f{\frac}  \def\fa{\forall}
\def\b{\beta}  \def\a{\alpha} 
 \def\t{\times}  
\def \Del{\Delta}
\def\ms{\medskip}  
\def\p{\partial}
\newcommand{\lc}
{\mathrel{\raise2pt\hbox{${\mathop<\limits_{\raise1pt\hbox
{\mbox{$\sim$}}}}$}}}
\newcommand{\gc}
{\mathrel{\raise2pt\hbox{${\mathop>\limits_{\raise1pt\hbox{\mbox{$\sim$}}}}$}}}
\newcommand{\ec}
{\mathrel{\raise2pt\hbox{${\mathop=\limits_{\raise1pt\hbox{\mbox{$\sim$}}}}$}}}
\def\bb{\begin{equation}} \def\ee{\end{equation}}
\def\beqn{\begin{eqnarray}}  \def\eqn{\end{eqnarray}}
\def\beqnx{\begin{eqnarray*}} \def\eqnx{\end{eqnarray*}}
\def\bn{\begin{enumerate}} \def\en{\end{enumerate}}
\def\bd{\begin{description}} \def\ed{\end{description}}
\newenvironment{tablehere}
  {\def\@captype{table}}
  {}
\newenvironment{figurehere}
  {\def\@captype{figure}}
  {}
\newenvironment{aligncases}
    {\left\{ \begin{aligned} }
    {\end{aligned} \right.    }
\newcommand{\diff}[1]{\partial_{#1}}
\newcommand{\grad}{\nabla}
\newcommand{\gene}{\mathcal{L}}
\newcommand{\ex}{\mathbb{E}}
\newcommand{\inprod}[2]{\langle #1,#2\rangle}
\newcommand{\size}[1]{\Delta #1}
\newcommand{\dom}{D^\epsilon}
\newcommand{\norm}[2]{\|#1\|_{#2}}
\newcommand{\alg}[1]{\mathcal{F}_{#1}}
\DeclareMathOperator{\range}{Range}
\begin{document}

\title{Fully Discrete Schemes and Their Analyses \\ for Forward-Backward Stochastic Differential Equations
}
\author{
Kazufumi Ito\thanks{‡Department of Mathematics and Center for Research in Scientific Computation, North Carolina
State University, Raleigh, NC 27695.
({\tt kito@unity.ncsu.edu})
}
\quad
Yufei Zhang\thanks{Mathematical Institute, University of Oxford, Oxford OX2 6GG, UK.
({\tt Yufei.Zhang@maths.ox.ac.uk})
}
\quad
Jun Zou\thanks{Department of Mathematics, The
Chinese University of Hong Kong, Shatin, Hong Kong, China.
The work of this author 
was substantially supported by Hong Kong RGC General Research Fund (projects 14322516 and 14306814).
({\tt zou@math.cuhk.edu.hk})}
}
\date{}

\maketitle

%\tableofcontents
\noindent\textbf{Abstract.} We propose some numerical schemes for forward-backward stochastic differential equations (FBSDEs)
based on a new fundamental concept of
transposition solutions. These schemes exploit time-splitting methods for the variation of constants formula of the associated partial differential equations and a discrete representation of the transition semigroups.
The convergence of the schemes is established for FBSDEs with uniformly Lipschitz drivers, locally Lipschitz and maximal monotone drivers.
Numerical experiments
are presented for several nonlinear financial derivative pricing problems to demonstrate the adaptivity and effectiveness of
the new schemes. The ideas here can be applied
to construct high-order schemes for FBSDEs with general Markov forward processes.

\medskip
\textbf{Key words.} Backward stochastic differential equations, transposition solutions, operator-splitting method, variation of constants formula, mild solutions, derivative pricing.

\ms
\textbf{AMS subject classifications.}
60H35, %Computational methods for stochastic equations
65C30, %Stochastic differential and integral equations
65M12  %Stability and convergence of numerical methods

\medskip

\section{Introduction}

Since the seminal work by Pardoux-Peng \cite{pardoux1990adapted} about the unique solvability results for the nonlinear backward stochastic differential equations (BSDEs) in early 1990's, BSDEs and forward-backward stochastic differential equations (FBSDEs) have
become a significant tool in many areas of science, see  \cite{el1997backward,el1997bsdefin} and references therein, for more about the application of such equations in stochastic optimal control and option pricing in mathematical finance. As it is usually difficult to obtain analytic solutions of BSDEs, even for linear cases,
it is necessary to design effective and robust numerical methods for BSDEs, that is, to construct
the state process and the martingale integrand process numerically.

There are several technical issues in constructing efficient and accurate numerical methods for FBSDEs. In the Markovian case with path-independent terminal conditions, two main technical obstacles are the dimensionality and high-order approximation.
The former arises naturally as a consequence of the close relationship between FBSDEs and their corresponding partial differential equations (PDEs), where the "curse of dimensionality" is an inevitable difficulty for any numerical solver. On the other hand, as noticed in \cite{ma2008numerical}, although higher order approximations are available for initial value problems for SDEs (see e.g., \cite{kloeden2011numerical}), it is non-trivial to extend those methods to coupled FBSDEs, even for the case with smooth coefficients. In the non-Markovian case where the terminal conditions are allowed to depend on the entire history of the forward process, the main difficulty usually comes from the approximation of the martingale integrand, whose path regularity is necessary for the construction of a numerical scheme for BSDEs. %In fact, to our best knowledge, for almost all the existing schemes, either high regularity conditions are assumed to ensure the regularity for the trajectories of $Z$, or otherwise a good convergence rate can not be guaranteed.
All these intrinsic difficulties make the construction of numerical solutions to BSDEs a much more challenging problem compared to the ordinary initial value problems for SDEs.

Enormous efforts have been made in recent years to circumvent those fundamental difficulties in the numerical resolution of BSDEs.
%and various methods have been proposed.
The most existing numerical methods for BSDEs can be classified into two groups. %Until now basically two branches of algorithms have been suggested.
By projecting the solution on the available information at each step, one group of algorithms goes backwards in time and solves the BSDEs and FBSDEs directly. This idea was employed in  \cite{bally1997approximation} and \cite{chevance1997numerical} respectively  to derive numerical schemes with a random time discretization under strong regularity condition ($C^4$) on the coefficients. More recently, a new notion of $L^2$-regularity on the control part of the solution was introduced in \cite{zhang2004numerical}, based on which numerical schemes with deterministic time partitions can be constructed \cite{bender2008time, bouchard2004discrete,gobet2010time} and a strong convergence in time can be established  under some weak regularity assumptions on
the coefficients. We note that "projecting the solution on the current information" means that the evaluation of conditional expectations is required at each step, and consequently, this type of schemes results in a high order nesting of conditional expectations as it works backwards through time. Therefore, an efficient estimation of the conditional expectations must be employed to
derive a fully discretized scheme and to avoid explosive computational costs  (see \cite{ bally2003quantization, bender2007forward, bouchard2004discrete, carriere1996valuation,delarue2006forward,gobet2005regression} for some  choices of these simulation-based estimators).

Alternatively, based on the theoretical connection between FBSDEs and  PDEs via the Feynman-Kac-type representation formulas, a four step scheme for FBSDEs was suggested in \cite{ma1994solving},
 and more recently developed in \cite{douglas1996numerical, milstein2006numerical, zhang2002discretizing}. The main idea of these
schemes is to express the solution of a FBSDE as a value function of the time and state
of the forward process, which can be obtained from %by solving
a deterministic backward nonlinear parabolic PDE.

We remark that all the numerical schemes mentioned above are basically for BSDEs with
natural filtration, for which the existence of solutions is established mostly by the Martingale Representation
Theorem. So these methods may not be applicable if the underlying filtration of
BSDEs is more general than the natural filtration, for instance, if we aim to price an option
as an inside trader who acquires more knowledge than the public market. Motivated by these
applications, a new notion of transposition solutions was proposed in \cite{lu2013well} for
BSDEs, and it coincides with the strong solution when the filtration is natural. The well-posedness of
 transposition solutions to BSDEs was investigated without the Martingale Representation
Theorem, hence principally more flexible for general filtration.

%In this work, we shall propose three easily implementable algorithms for decoupled Markovian FBSDEs under the framework of transposition solution and a discrete representation of the transition semigroup, which are consequently more flexible than most existing schemes when extended to FBSDEs with a general Markov forward process under  general filtration.
%Our methods, in spirit, are very close to the classical finite element method solving deterministic PDEs, and hence can be viewed as a generalization of the finite element-type methods into stochastic problems.

In this work, we shall propose three easily implementable numerical algorithms for decoupled Markovian FBSDEs under the framework of transposition solutions. This fundamental framework enables us to construct various different schemes by taking different
test functions, and these algorithms are more
flexible than most existing schemes when
they applied to FBSDEs with general filtration. We demonstrate in this work that by taking suitable test functions, this framework leads to numerical schemes solving FBSDEs through a PDE approach. We remark that, instead of the frequently used direct discretization
of the corresponding PDEs \cite{ma2002numberical,ma2008numerical}, our schemes exploit the time-splitting approach
for the variation of constants formula of the associated PDEs, and represent the transition semigroups of It\^{o} diffusions by their transition probabilities among partition sets, which can be efficiently evaluated by Monte Carlo method. Similar representations can be carried out for the semigroup associated to a general Markov process, which leads to straightforward extensions of our algorithms to BSDEs driven by L\'{e}vy process. Furthermore, high-order schemes can be developed systematically based on a high-order splitting of the variation of constants formula.
Another major contribution of this work is the convergence analysis of the probabilistic numerical schemes for the associated semilinear PDEs. Through the help of the semigroup theory, we establish the convergence respectively for FBSDEs with both
Lipschitz and maximal monotone drivers.  
%\textcolor{blue}{We remark that, although our explicit scheme can be interpreted as a special case of the quantization algorithm  in \cite{bally2003quantization}, we emphasize that our scheme is constructed and analyzed in a completely different manner , which leads us to high-order schemes for general FBSDEs (see the remarks after Algorithm 1 in Section \ref{section:method} for details).}
We also remark that, to the best of our knowledge, a convergent scheme for FBSDEs with monotone drivers is rarely discussed in the existing literature except for those in \cite{lionnet2015time} and appears to be of great importance for the applicability of FBSDEs to many physical phenomena. It should be mentioned that the $\theta$-schemes suggested in \cite{lionnet2015time} is quite different from ours, on account of the facts that their schemes not only require an approximation of conditional expectations, but also solve a FBSDE whose forward process starts at a particular time with a particular state. As a result, when the initial time or the initial state of the forward process changes, one needs to perform all the computations again, which is clearly very expensive. By contrast, our methods solve for the entire surface of the value function, which is independent of the initial conditions of the forward SDE. Therefore, along with simple numerical schemes for the forward SDE, one can efficiently obtain solutions to FBSDEs with different initial conditions of the forward process, which is of particular importance to mathematical finance, for instance, the evaluation of an option price in terms of  different spot prices of the stocks.

We start by recalling the concept of transposition solutions introduced in  \cite{lu2013well} for BSDEs with general filtration. Then from an important variational formulation of the FBSDEs, depending on the complexity of the drivers, different algorithms for the numerical resolution of FBSDEs are proposed based on an approximation of the corresponding transition semigroups. Another major focus of this work is the convergence of the associated PDE solvers, which will be analyzed via the semigroup theory in virtue of the natural correspondence between second-order elliptic operators and transition semigroups. By first deriving the variation of constants formula of the associated PDEs from the variational formulation of the FBSDEs, we are able to establish the convergence of our explicit scheme for the BSDEs with Lipschitz continuous drivers involving only the state variable.  A rigorous analysis
is also carried out for the hybrid scheme to demonstrate its convergence
for the FBSDEs with maximal monotone drivers of only locally Lipschitz continuity.

We organize this paper as follows. In Section 2, we define some necessary notation and introduce the nonlinear Feynman-–Kac formula, which is of crucial importance to connect the numerical resolution of FBSDEs to the numerical approximation of Cauchy problems.  Then we introduce a variational formulation of FBSDEs in Section 3 motivated by the transposition solutions, from which an explicit scheme, a hybrid scheme and an implicit scheme are derived to solve the FBSDEs. Then we move on to the convergence analysis of our numerical schemes under different assumptions on the drivers. We derive the variation of constants formula and define the mild solutions for a given Cauchy problem in Section 4, which shall help us understand the convergence of our algorithms. Then, a clear convergence analysis of the explicit scheme and the hybrid scheme is separately performed in Section 4 for FBSDEs with Lipschitz continuous drivers and for FBSDEs with maximal monotone drivers. Numerical experiments are presented in Section 5 to illustrate the effectiveness of our algorithms and demonstrate their applications to nonlinear pricing problems for vanilla options with differential interest rates.  
%We emphasize that, although our analyses are performed only for the one-dimensional case, similar results and analyses are valid for higher dimensions as well.

%We further remark that as a first step in the numerical analysis of FBSDEs with general drivers and filtrations, we only treat the FBSDEs with a (time homogenous) Ito diffusion as the forward process in this article and plan to address the general case in a forthcoming work.
%
\section{Problem formulation and preliminaries}
In this section, we introduce our model FBSDEs and recall the important connection between FBSDEs and PDEs, which is crucial for the
subsequent developments. We start
with some useful notation that is needed frequently in the rest of this work.

We shall write by $T > 0$ the terminal time, and by
$(\Om, \alg{}, P)$ a complete probability space, in which an $m$-dimensional Brownian motion $B_t$ is defined.
We shall denote by $\alg{}=\{\alg{t}\}_{0\le t\le T}$ the natural filtration generated by the Brownian motion augmented by the $P$-null sets and by $\ex$ the usual expectation operator with respect to the measure $P$.

For $(s,x)\in [0,T]\times \R^d$, we denote by $Q^{s,x}$ the probability law of a given Markov process $X_t^{s,x}=\{X_t^{s,x}\}_{t\ge s}$ with initial value $X_s=x$ and by $\ex^{s,x}$ the expectation operator with respect to $Q^{s,x}$. Then we have
$\ex^{s,x}[f(X_t)]=\ex[f(X^{s,x}_t)]$ for all Borel functions $f$ and all time $t\ge s$. If $s=0$, we simply write $X_t^x$, $\ex^x$ and $Q^x$ for $X_t^{0,x}$, $\ex^{0,x}$ and $Q^{0,x}$, respectively.

Furthermore we define three important spaces: $L^2_{\alg{t}}(\Om;\R)$ is the Hilbert space of all $\alg{t}$-measurable $\R$-valued random variable $\xi$ with $\ex[\xi^2]<\infty$, $L^2_{\alg{}}(\Om;L^r(t,s;\R^d))~ (1\le r\le \infty)$ is the Hilbert space of all $\R^d$-valued, $\alg{}$-adapted processes $X(\cdot)$ satisfying $\ex[\norm{X(\cdot)}{L^r(t,s;\R^d)}^2]<\infty$, and $C([a,b];X)$ (resp. $B([a,b];X)$) is the Banach space of all continuous (resp. bounded) functions $u:[a,b]\to X$ for a Banach space $X$. Finally, when no ambiguity arises, we denote by $\norm{\cdot}{}$  the norm  on $L^2(\R^d)$ and by $A\lesssim B$ the relation $A\le CB$, where $C$ is a generic constant independent of time stepsize and mesh size and may take a different value at each occurrence.

Now we are ready to state the problem of our interest. We shall consider the decoupled Markovian FBSDE of the  form:
\begin{equation}\label{eq:BSDE}
\begin{aligncases}
dX_t&=b(X_t)dt+\sigma(X_t)dB_t, &&X_0=x_0,\\
dY_t&=g(t,X_t,Y_t,Z_t)dt+Z_tdB_t, && Y_T=\phi(X_T),
\end{aligncases}
\end{equation}
where the processes $X_t,Y_t,Z_t$ are $\R^d$-valued, $\R$-valued and $\R^m$-valued, respectively. And the driver $g$ and the terminal condition $\phi$ are given deterministic measurable functions. The (path-wise strong) solution of \eqref{eq:BSDE} is a triple of adapted processes $(X_t, Y_t,Z_t)$ which satisfies the equation.

%
%\color{blue}
 We emphasize that although we shall focus all our discussions in this work
 on the case where the terminal condition of the BSDE in \eqref{eq:BSDE} depends only on the states of $X_t$ at the terminal time,
 our algorithms and analyses can be naturally
 extended to the cases where the terminal conditions may involve the states of $X_t$ at finitely many discrete time points.
 Moreover, to further simplify our presentations, we shall assume that $X_t$ is the time-homogeneous It\^{o} diffusions \cite{oksendal2013stochastic}.
%\color{black}
%
\begin{Definition} A time-homogenous Ito diffusion is a stochastic process $X_t(\omega)=X(t,\omega):[0,\infty)\times \Om\to \R^d$
that satisfies a stochastic differential equation of the form
\begin{equation}\label{eq:diffusion}
dX_t=b(X_t)dt+\sigma(X_t)dB_t,\quad t\ge s;\quad X_s=x,
\end{equation}
where  $b:\R^d\to \R^d$ and $\sigma:\R^d\to \R^{d\times m}$ are Lipschitz continuous functions.
\end{Definition}

We will denote the unique solution of \eqref{eq:diffusion} by $\{X_t^{s,x}\}_{t\ge s}$. It is well-known that an Ito diffusion is a sample continuous stochastic process with the property of being time-homogenous, in the sense that
$\{X_{s+h}^{s,x}\}_{h\ge 0}$ and $\{X_h^{0,x}\}_{h\ge 0}$ have the same $P$-distributions, which in turn implies the Markov property. Therefore, if $f$ is a properly defined function, then we can define for any $x\in \R^d$ and $t>0$
the following transition semigroup $S(t)$ associated to $\{X_t^x\}$:
\begin{equation}\label{eq:diffsgp}
S(t)f(x)=\ex^{x}[f(X_t)]=\ex[f(X^x_t)]=\int_\R f(y)p(t,x,y)\,dy,
\end{equation}
where $p(t,x,y)$ denotes the transition density of $X_t$ if it exists. The linear operator $S(t)$ is non-negative,
i.e., $S(t)f\ge 0$ whenever $f\ge 0$ and the semigroup property of $S(t)$ follows from the law of total expectations and the Markov property of $X_t$, i.e., for all $s,t\ge 0$, $x\in \R^d$,
\begin{align*}
S(t+s)f(x)&=\ex[f(X^x_{t+s})]=\ex[\ex[f(X^x_{t+s})\mid \alg{t}](\omega)]=\ex[\ex^{X_t^x(\omega)}[f(X_s)]]\\
&=\ex[S(s)f(X_t^x(\omega))]=S(t)S(s)f(x).
\end{align*}

The rest of this section is devoted to introducing the nonlinear Faynman-Kac formula, which, as we shall see,
not only provides a probabilistic representation of the solution to a class of parabolic PDEs, but also plays an essential role in reducing the numerical resolution of \eqref{eq:BSDE} into the numerical approximation of a backward
semilinear parabolic PDE.

 %Next we shall introduce a PDE representation of BSDEs .
For $(s,x)\in [0,T]\times \R^d$, let $(X_t^{s,x})_{s\le t\le T}$ solve \eqref{eq:diffusion}. 
Consider the semilinear parabolic PDE:
\begin{equation}\label{eq:semilinear}
\begin{aligncases} v_t+\gene v&=g(t,x,v(t,x),\sigma^T \grad_x v(t,x)),&(t,x)\in [0,T)\times \R^d,\\
v(T,x)&=\phi(x), &x\in \R^d,
\end{aligncases}
\end{equation}
where $\gene$ denotes the second order linear differential operator
\begin{equation}\label{eq:elliptic}
\gene=\sum_{i,j=1}^na_{ij}\diff{x_ix_j}{}+\sum_{i=1}^n b_i\diff{x_i}{},\quad a_{ij}=\dfrac{1}{2}[\sigma\sigma^T]_{ij}.
\end{equation}
Using the It\^{o}'s formula, it is straightforward to show that if $v\in C^{1,2}([0,T]\times \R^d)$ solves the above PDE, then $Y^{s,x}_t\coloneqq v(t,X^{s,x}_t)$ and $Z^{s,x}_t\coloneqq \sigma(X^{s,x}_t)^T \grad_x v(t,X^{s,x}_t)$ solves the following BSDE
\begin{equation}\label{eq:BSDE2}
 dY^{s,x}_t=g(t,X^{s,x}_t,Y^{s,x}_t,Z^{s,x}_t)dt+Z^{s,x}_tdB_t,\quad t\in [s,T]\times \R^d;\qq Y_T=\phi(X^{s,x}_T).
 \end{equation}
However, the more interesting result is the converse one, as shown in the next theorem \cite{yong1999stochastic}, the solution to BSDE \eqref{eq:BSDE2} also solves the PDE in some sense.

\begin{Theorem}
Let $b,\sigma$ be uniformly Lipschitz continuous in $x\in \R^d$, $g$ be uniformly Lipschitz continuous in $(y,z)\in \R\times \R^m$ (with respect to $(t,x)\in [0,T]\times \R^d$) and $\phi$ be continuous. Then \eqref{eq:semilinear} admits a unique viscosity solution $v$ that can be represented by 
\begin{equation}\label{bsdetopde}
v(t,x)\equiv Y_t^{t,x},\quad (t,x)\in [0,T]\times \R^d,
\end{equation}
where $(Y_t^{s,x},Z_t^{s,x})$ be the unique adapted solution to \eqref{eq:BSDE2}. In the case where \eqref{eq:semilinear} admits a classical solution, then the solution can be given by \eqref{bsdetopde}.
\end{Theorem}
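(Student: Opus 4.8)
The plan is to proceed in four stages: (i) establish well-posedness of the parametrized system \eqref{eq:diffusion}--\eqref{eq:BSDE2} together with the relevant a priori bounds; (ii) define $v$ by \eqref{bsdetopde} and show it is a deterministic function with enough regularity (continuity, linear growth); (iii) verify that $v$ is a viscosity solution of \eqref{eq:semilinear}; and (iv) invoke a comparison principle to obtain uniqueness, and reconcile with the classical solution via It\^o's formula.

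First, under the Lipschitz hypotheses on $b,\sigma$ the forward SDE \eqref{eq:diffusion} has a unique strong solution $X^{s,x}_t$ obeying the standard estimates $\ex[\sup_{s\le t\le T}|X^{s,x}_t|^2]\lesssim 1+|x|^2$ and $\ex[\sup_{s\le t\le T}|X^{s,x}_t-X^{s,x'}_t|^2]\lesssim |x-x'|^2$; with $\phi$ continuous (and of the polynomial growth implicitly needed to make $\ex[\phi(X^{s,x}_T)^2]$ finite) and $g$ uniformly Lipschitz in $(y,z)$, the Pardoux--Peng theorem yields a unique adapted pair $(Y^{s,x},Z^{s,x})$ in the natural $L^2$ spaces. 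Since the terminal datum and the coefficients of \eqref{eq:BSDE2} are deterministic functions of the Markov process $X^{s,x}$, the value $Y^{s,x}_s$ is $\alg{s}$-measurable, and because $X^{s,x}_s=x$ is deterministic it is a.s. constant; hence $v(t,x)\coloneqq Y^{t,x}_t$ is a well-defined deterministic function. Classical BSDE stability estimates, combined with the forward bounds above, then give $|v(t,x)|\lesssim 1+|x|$ together with the continuity of $(t,x)\mapsto v(t,x)$ (Lipschitz in $x$ if $\phi$ is Lipschitz, and $\tfrac12$-H\"older in $t$).

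The structural identity driving the argument is the flow (Markov) property $Y^{s,x}_t=v(t,X^{s,x}_t)$ for all $s\le t\le T$, $P$-a.s. This follows because uniqueness for \eqref{eq:diffusion} gives $X^{s,x}_r=X^{t,X^{s,x}_t}_r$ for $r\ge t$, so the pair $(Y^{t,X^{s,x}_t},Z^{t,X^{s,x}_t})$ solves on $[t,T]$ the same BSDE with the same terminal condition as $(Y^{s,x},Z^{s,x})|_{[t,T]}$; uniqueness of the BSDE (applied conditionally on $\alg{t}$, using that $X^{s,x}_t$ is $\alg{t}$-measurable) forces $Y^{s,x}_r=v(r,X^{s,x}_r)$ on $[t,T]$, in particular at $r=t$. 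With this in hand I verify the viscosity property. Let $\psi\in C^{1,2}$ be such that $v-\psi$ has a local maximum at $(t_0,x_0)$ with $v(t_0,x_0)=\psi(t_0,x_0)$; I apply It\^o's formula to $\psi(r,X^{t_0,x_0}_r)$ on $[t_0,t_0+h]$ (up to a localization stopping time), compare it through the BSDE comparison theorem with the backward equation satisfied by $Y^{t_0,x_0}_r=v(r,X^{t_0,x_0}_r)$ (using $v\le\psi$ near $(t_0,x_0)$), divide by $h$, take expectations so the martingale term drops, and let $h\to0$ to deduce $\psi_t(t_0,x_0)+\gene\psi(t_0,x_0)+g\big(t_0,x_0,\psi(t_0,x_0),\sigma^T\grad_x\psi(t_0,x_0)\big)\ge 0$; the supersolution inequality is symmetric. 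Finally, the comparison principle for viscosity solutions of \eqref{eq:semilinear}, available under the present Lipschitz and growth assumptions, gives uniqueness in the class of continuous functions of linear growth; and when a classical solution exists, the It\^o computation recalled just before the theorem shows it yields an adapted solution of \eqref{eq:BSDE2}, so by uniqueness of that solution it must coincide with $v$.

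I expect step (iii) to be the main obstacle: one has to combine the random It\^o expansion of $\psi(r,X_r)$ with the pathwise BSDE comparison theorem, control the $Z$-term (it vanishes in expectation only after a localization argument ensuring the stochastic integral is a true martingale), and pass to the limit $h\to 0$ with enough uniformity. A secondary but genuine technicality is the measure-theoretic handling of the flow property, where the exceptional null sets depend on $(s,x)$; this is resolved by exploiting the continuity of $v$ and a density argument over $(s,x)$.
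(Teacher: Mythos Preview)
The paper does not actually prove this theorem: it is stated as a known result and attributed to \cite{yong1999stochastic} (see the sentence immediately preceding the theorem). Your four-stage outline---well-posedness and moment bounds for the forward--backward system, the Markov/flow identity $Y^{s,x}_t=v(t,X^{s,x}_t)$, the viscosity test via It\^o's formula and BSDE comparison, and uniqueness by a comparison principle---is precisely the standard argument found in that reference (and in Pardoux--Peng and Pardoux--R\u{a}\c{s}canu), so there is nothing to compare.

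Two small points on your sketch. First, watch the sign: the equation here is $v_t+\gene v-g=0$, so at a local maximum of $v-\psi$ the subsolution inequality should read $\psi_t(t_0,x_0)+\gene\psi(t_0,x_0)-g\big(t_0,x_0,v(t_0,x_0),\sigma^T\grad_x\psi(t_0,x_0)\big)\ge 0$, not $+g$. Second, the hypothesis in the statement is only that $\phi$ is continuous, which by itself does not guarantee $\phi(X_T)\in L^2$; you are right that an implicit growth condition (e.g.\ polynomial growth) is needed for the Pardoux--Peng existence theorem to apply, and this is indeed assumed in the cited source.
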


%\color{blue}
We remark that in the general case where the terminal condition of the BSDE in \eqref{eq:BSDE} involves the states of $X_t$ at finitely many time points, a PDE representation of the solution to \eqref{eq:BSDE} similar to the system \eqref{eq:semilinear}
can be established by inductively solving a branch of PDEs corresponding to finitely many time intervals \cite{zhang2002discretizing}.
%\color{black}

\section{Numerical schemes}\label{section:method}
In this section, we propose three numerical schemes for solving the BSDE \eqref{eq:BSDE} under different assumptions on
the complexity of the driver $g$. All methods are motivated by a weak formulation of \eqref{eq:BSDE} from the viewpoint of transposition solutions, which was introduced in \cite{lu2013well}.
%
%To simplify our presentations, we consider only one-dimensional processes in the rest of this work,
%but most of the subsequent results can be naturally extended to higher dimensions.

We start our discussions by considering the following forward SDEs, whose solutions will work
as test functions in the framework of transposition solutions:

Given $u\in L^2_{\alg{}}(\Om;L^1(s,t;\R))$, $w\in L^2_{\alg{}}(\Om;L^2(s,t;\R^m))$ and $\eta\in L^2_{\alg{s}}(\Om;\R)$, the following  SDE
\begin{equation}\label{eq:SDE}
dS_\tau=u_\tau d\tau+w_\tau dB_\tau,\quad \tau\in [s,t];\qq S_s=\eta,
\end{equation}
 admits a unique solution $S_\tau$ for $\tau\in [s,t]$ in the sense of It\^{o} integral.
 %Now suppose $(Y_t,Z_t)$ is the path-wise strong solution to the following BSDE:
%\begin{equation}\label{eq:generalBSDE}
%\begin{aligncases}
%dY_t&=g(t,X_t,Y_t,Z_t)dt+Z_tdB_t,\quad t\in [0,T],\\
%Y_T&=\xi\in L^2_{\alg{T}}(\Om;\R).
%\end{aligncases}
%\end{equation}
%that is, $$Y_t=\xi-\int_t^T g(s,X_s,Y_s,Z_s)\,ds- \int_t^T Z_sdB_s,\quad \fa t\in [0,T].$$

Now suppose \eqref{eq:BSDE} admits a strong solution $(X_t,Y_t,Z_t)$, then we can readily derive
for any $t\in [s,T]$ by applying the It\^{o}'s formula to the process $S_tY_t$ and taking the expectations:
\begin{align}\label{eq:variational}
\begin{split}
\ex[S_tY_t]-\ex[\eta Y_s]=&\ex \bigg[\int_s^t S_\tau g(\tau,X_\tau,Y_\tau,Z_\tau)\,d\tau\bigg]+\ex \bigg[\int_s^t u_\tau Y_\tau\,d\tau\bigg]+\ex \bigg[\int_s^t w_\tau^T Z_\tau\,d\tau\bigg],
\end{split}
\end{align}
where we have used the fact that $\ex [\int_s^t \big ( Y_\tau w_\tau+S_\tau Z_\tau\big)\,dB_\tau]=0$.
Motivated by this formula, we come naturally to the following weak formulation for the backward processes of \eqref{eq:BSDE}.
%, which provides the basic framework under which our proposed algorithms are developed.

\ms
\textbf{Variational formulation}:
Find a pair $(Y_t,Z_t)\in L^2_{\alg{}}(\Om; L^2(0,T;\R)) \t L^2_{\alg{}}(\Om; L^2(0,T;\R^m)) %\times L^2_{\alg{}}(\Om;(L^2(0,T;\R)))
$
such that identity \eqref{eq:variational} holds
for all $s, t\in [0,T]$ with $s\le t$ and $(u,w,\eta)\in L^2_{\alg{}}(\Om;L^1(s,t;\R))\times L^2_{\alg{}}(\Om;L^2(s,t;\R^m))\times L^2_{\alg{s}}(\Om;\R)$.
\ms

A pair of adapted processes $(Y_t,Z_t)$ satisfying the above variational formulation is called a transposition solution to \eqref{eq:BSDE}. It is clear that whenever \eqref{eq:BSDE} admits a strong solution, it coincides with the transposition solution. We refer readers to  \cite{lu2013well} for the well-posedness of \eqref{eq:BSDE} in the sense of transposition solutions, which was established without the Martingale representation theorem. 
\textcolor{black}{We remark that transposition solutions generalize the duality relationship between linear BSDEs and 
SDEs (see, e.g. \cite{yong1999stochastic}) to nonlinear BSDEs, and consequently enables our 
construction of numerical schemes for BSDEs with general filtration, which will not be discussed in this work.}
% thus the transposition solutions are more flexible to extend to BSDEs with general filtration.

We now propose some numerical schemes for solving the FBSDEs \eqref{eq:BSDE}.
It is important for us to point out that
we shall not intend to directly solve the PDE \eqref{eq:semilinear},
but the nonlinear Faynman-Kac formula that connects the solutions
to FBSDEs and PDEs
is the principal idea that suggests us to construct our numerical schemes
under the novel framework of transposition solutions.

Let $0=t_0<t_1<\cdots<t_N=T$ be a time partition of $[0,T]$ with time stepsize $\Del_{k+1}=t_{k+1}-t_k$.
Motivated by the nonlinear Faynman-Kac formula, we can write $$Y_{t_k}=v(t_k,X_{t_k}),\quad Z_{t_k}=\sigma(X_{t_k})^T\nabla_x v(t_k,X_{t_k}),\quad k=0,\cdots,N,$$
for some function $v$. We now select
a set of orthonormal basis $\{h_i\}_{i=1}^\infty$ in $L^2(\R^d)$, then
approximate $(Y_{t_k},Z_{t_k})$ for each discrete time point $t_k$ by
\begin{equation}\label{eq:YZ}
\hat{Y}_{t_k}=\sum_{i=1}^\infty\alpha_k^i h_i(X_{t_k}),\quad \hat{Z}_{t_k}=\sum_{i=1}^\infty\beta_k^i h_i(X_{t_k}).
\end{equation}
That is, we approximate $v(t_k,x)$ and $\sigma(x)^T\nabla_x v(t_k,x)$ respectively by
\begin{equation}\label{eq:vw}
v^k=\sum_{i=1}^\infty\alpha_k^i h_i(x),\quad w^k=\sum_{i=1}^\infty\beta_k^i h_i(x)\,.
\end{equation}

Suppose $p_{{X_{t_k}} }$, $k=0,1,\cdots,N$, are the density functions of $X_{t_k}$ with
$p_{X_{t_k}}>0$ almost everywhere  and
$p^{-1}_{X_{t_k}}$ are the reciprocals of $p_{X_{t_k}}$ 
\textcolor{black}{(see e.g. \cite{fournier2010absolute} for the existence of such density functions for It\^{o} diffusions)}.
Then the orthonormality of $\{h_i\}_{i=1}^\infty$ implies
\begin{equation}\label{eq:ortho}
\ex[p^{-1}_{X_{t_k}}(X_{t_k})h_i(X_{t_k})h_j(X_{t_k})]=\delta_{ij}~~\q \fa i,j \ge 1, k=0,1,\cdots,N\,.
\end{equation}
%where $p^{-1}_{X_{t_k}}$ denotes the reciprocal of the density function $p_{X_{t_k}}$.

Next we discuss how to compute all the coefficients $\{\alpha_k^i\}$ and $\{\beta_k^i\}$ in \eqref{eq:YZ}.
First for $t_N=T$,
by representing the terminal condition of \eqref{eq:BSDE} in the form of \eqref{eq:YZ},
we derive from \eqref{eq:ortho} that
\begin{equation}\label{eq:alphatN}
\alpha_N^i=\ex[p^{-1}_{X_T}(X_T)\phi(X_T)h_i(X_T)]=\int_{\R^d} \phi(x)h_i(x)\, dx, \quad i\ge 1.
\end{equation}

For each $t_k$ with $k=N-1,\cdots, 1,0$, we consider \eqref{eq:SDE} on $[t_k,t_{k+1}]$.
Selecting $u=0,w=0,\eta=h_i(X_{t_k})p^{-1}_{X_{t_k}}(X_{t_k})$ in \eqref{eq:SDE}, we see
$S_\tau=h_i(X_{t_k})p^{-1}_{X_{t_k}}(X_{t_k})$ for $\tau\in [t_k,t_{k+1}]$.
Then we can deduce from \eqref{eq:variational} and \eqref{eq:ortho} that for $i\ge 1$,
\begin{eqnarray}
\alpha_k^i&=&\ex[p^{-1}_{X_{t_k}}(X_{t_k})h_i(X_{t_k})Y_{t_{k+1}}]-\ex\bigg[\int_{t_k}^{t_{k+1}}p^{-1}_{X_{t_k}}(X_{t_k})h_i(X_{t_k})g(t_\tau,X_\tau,Y_\tau,Z_\tau)\,d\tau\bigg] \label{eq:alpha}\\
&=&\sum_{j=1}^\infty\ex[p^{-1}_{X_{t_k}}(X_{t_k})h_i(X_{t_k})h_j(X_{t_{k+1}})]\alpha^j_{k+1}-\ex\bigg[\int_{t_k}^{t_{k+1}}p^{-1}_{X_{t_k}}(X_{t_k})h_i(X_{t_k})g(t_\tau,X_\tau,Y_\tau,Z_\tau)\,d\tau\bigg]. \nb
\end{eqnarray}
%
%\begin{equation}\label{eq:alpha}
%\begin{split}
%\alpha_k^i&=\ex[p^{-1}_{X_{t_k}}(X_{t_k})h_i(X_{t_k})Y_{t_{k+1}}]-\ex\bigg[\int_{t_k}^{t_{k+1}}p^{-1}_{X_{t_k}}(X_{t_k})h_i(X_{t_k})g(t_\tau,X_\tau,Y_\tau,Z_\tau)\,d\tau\bigg]\\
%&=\sum_{j=1}^\infty\ex[p^{-1}_{X_{t_k}}(X_{t_k})h_i(X_{t_k})h_j(X_{t_{k+1}})]\alpha^j_{k+1}-\ex\bigg[\int_{t_k}^{t_{k+1}}p^{-1}_{X_{t_k}}(X_{t_k})h_i(X_{t_k})g(t_\tau,X_\tau,Y_\tau,Z_\tau)\,d\tau\bigg].
%\end{split}
%\end{equation}

The above derivation holds for any orthonormal basis $\{h_i\}$. Now we choose $\{h_i\}$ to be the indicator functions
to further simplify the expression \eqref{eq:alpha}. 
%\color{blue}
Let $\Z$ be the set of all integers,
$\mathcal{R}\coloneqq\{x_j\mid x_j\in \R, x_j<x_{j+1},j\in \Z, \lim_{j\to \pm\infty}x_j=\pm\infty\}$
be a spatial partition of the real axis $\R$, and $\mathcal{R}^d=\prod_{j=1}^d\mathcal{R}_j$ be a  partition of the Euclidean space $\R^d$. For each $i=(i_1,\cdots,i_d)\in \Z^d$, let $I_i=\prod_{j=1}^d(x_{i_j},x_{i_j+1}]$ with size $|I_i|=\prod_{j=1}^d (x_{i_j+1}-x_{i_j})$.
%\color{black}
Then we choose $h_i$ to be the indicator function
$h_i(x)=1_{I_i}(x)/{\sqrt{|I_i|}}$ for $i\in \Z^d$. It is clear that $\{h_i\}_{i\in \Z^d}$ is orthonormal in $L^2(\R^d)$. 
%\color{blue}
We remark that although the idea of representing the numerical solutions in terms of the piecewise smooth basis is similar to  discontinuous Galerkin methods for solving PDEs \cite{riviere2008discontinuous}, our scheme is essentially different since no  stabilization terms are introduced to enforce the weak continuities of the numerical solutions on the common faces between 
neighboring elements.
%\color{black}

%Suppose that $\hat{Y}_{t_{k+1}}=\sum_{i\in \Z}\alpha_{k+1}^i h_i(X_{t_{k+1}})$ is the approximation for $Y_{t_{k+1}}$ with the coefficient $\alpha^{k+1}=(\alpha_{k+1}^i)_{i\in \Z}$, where $\a^N$ can be obtained by \eqref{eq:alphatN}.
Next we give the detailed update of the coefficients
$\{\alpha_k^i\}$  for each $k=N-1,\cdots, 1,0$. Define
$$P^k_{ij}=\ex[p^{-1}_{X_{t_k}}(X_{t_k})h_i(X_{t_k})h_j(X_{t_{k+1}})] ~~\q \fa i,j\in \Z^d, $$
then the Markov property of $X_t$ implies that $P^k=(P^k_{ij})_{i,j\in\Z}$ is a discrete version of the transition semigroup \eqref{eq:diffsgp} in the sense that for any $i,j\in \Z^d$, the following represention formula holds
\begin{equation}\label{eq:Pij}
P^k_{ij}=\inprod{S(\Del_{k+1})h_j}{h_i}_{L^2(\R^d)},\quad k=0,1,\cdots, N-1.
\end{equation}
In fact, for any $k=0,\cdots,N-1$, we have
\begin{align*}
P^k_{ij}&=\ex[p^{-1}_{X_{t_k}}(X_{t_k})h_i(X_{t_k})h_j(X_{t_{k+1}})]=\ex\big[\ex[p^{-1}_{X_{t_k}}(X_{t_k})h_i(X_{t_k})h_j(X_{t_{k+1}})\mid \alg{t_k}]\big]\\
&=\ex\big[p^{-1}_{X_{t_k}}(X_{t_k})h_i(X_{t_k})\ex[h_j(X_{t_{k+1}})\mid \alg{t_k}]\big]=\ex\big[p^{-1}_{X_{t_k}}(X_{t_k})h_i(X_{t_k})\ex^{X_t}[h_j(X_{\Del_{k+1}})]\big]\\
&=\int_{\R^d} h_i(x)\ex^{x}[h_j(X_{\Del_{k+1}})]\, dx=\inprod{S(\Del_{k+1})h_j}{h_i}_{L^2(\R^d)}.
\end{align*}
Furthermore, suppose the process $X_t$ has a transition density $p(t,x,y)$, then \eqref{eq:diffsgp} and \eqref{eq:Pij} yield
\bb\label{eq:Pijdensity}
P^k_{ij}=\frac{1}{\sqrt{|I_i|\t |I_j|}}\iint_{I_i\times I_j}p(\Del_{k+1},x,y)\,dxdy ~~\quad \fa i,j\in \Z^d, k=0,1,\cdots, N-1.
\ee

Finally, based on whether the driver $g$ involves the component $Z$, the integral in \eqref{eq:alpha} is approximated by different quadrature rules, which results in different algorithms.

\ms
\textbf{Case 1}: the driver $g$ is independent of the component $Z$. In this case,
we can derive an explicit scheme for solving BSDE \eqref{eq:BSDE} and PDE \eqref{eq:semilinear}.

    Approximating the integral in  \eqref{eq:alpha} by the right endpoint rule yields that
    \begin{align*}
    \ex\bigg[\int_{t_k}^{t_{k+1}}p^{-1}_{X_{t_k}}(X_{t_k})h_i(X_{t_k})&g(t_\tau,X_\tau,Y_\tau)\,d\tau\bigg]
    \approx \ex[p^{-1}_{X_{t_k}}(X_{t_k})h_i(X_{t_k})g(t_{k+1},X_{t_{k+1}},\hat{Y}_{t_{k+1}})]\Del_{k+1}\\
    &= \ex[p^{-1}_{X_{t_k}}(X_{t_k})h_i(X_{t_k})g(t_{k+1},X_{t_{k+1}},\sum_{j\in \Z^d}\alpha_{k+1}^j h_j(X_{t_{k+1}}))]\Del_{k+1}.
    \end{align*}

    Suppose $X_t$ has the transition density $p(t,x,y)$ and $X_{t_k}$ has the density $p_{X_{t_k}}$, then
    using the above expression and the joint density $p_{X_{t_k}}(x)p(\Del_{k+1},x,y)$ of $(X_{t_k},X_{t_{k+1}})$
    we deduce that
    \begin{align}\label{eq:gexp}
    \begin{split}
    &\ex[\int_{t_k}^{t_{k+1}}p^{-1}_{X_{t_k}}(X_{t_k})h_i(X_{t_k})g(t_\tau,X_\tau,Y_\tau)\,d\tau]\\
    \approx& \frac{1}{\sqrt{|I_i|}}\bigg(\iint_{\R^{2d}} p_{X_{t_k}}(x)p(\Del_{k+1},x,s) p^{-1}_{X_{t_k}}(x)1_{I_i}(x)g(t_{k+1},s,\sum_{j\in \Z^d}\frac{\alpha_{k+1}^j}{\sqrt{|I_j|}} 1_{I_j}(s))\,dsdx\bigg)\Del_{k+1}\\
    =& \frac{\Del_{k+1}}{\sqrt{|I_i|}}\sum_{j\in \Z^d} \big(\iint_{I_i\times I_j} p(\Del_{k+1},x,s) g(t_{k+1},s,\frac{\alpha_{k+1}^j}{\sqrt{|I_j|}} )\,dsdx\big)\\
    \approx &\Del_{k+1}\sum_{j\in \Z^d} \bigg(\sqrt{|I_j|}P^k_{ij}\bigg)\bigg(\frac{1}{|I_j|}\int_{I_j}g(t_{k+1},s,\frac{\alpha_{k+1}^j}{\sqrt{|I_j|}})\, ds\bigg),
    \end{split}
    \end{align}
    where we have used the representation formula \eqref{eq:Pijdensity} for $P^k_{ij}$ in the last approximation.

    Therefore, using \eqref{eq:alpha},  \eqref{eq:Pij} and \eqref{eq:gexp}, we come to the following scheme:
    \begin{equation}\label{eq:alphatkexp}
    \alpha_k^i=\sum_{j\in \Z^d} P^k_{ij}\big(\alpha_{k+1}^j-\Del_{k+1}\f{1}{\sqrt{|I_j|}}\int_{I_j}g(t_{k+1},s,\frac{\alpha_{k+1}^j}{\sqrt{|I_j|}})\, ds\big) ~~\quad \fa i\in \Z^d, k=0,1,\cdots, N-1,
    \end{equation}
    %where $P^k_{ij}$ is given by \eqref{eq:Pij}.
    On the other hand, we can update the coefficients $\b^k=\{\b_k^i\}$  of $w^k$ explicitly by
    \begin{equation}\label{eq:beta}
    \beta^k=\sigma^TD\alpha^{k+1},\q k=0,1,\cdots, N-1,
    \end{equation}
    where $D$ is a difference scheme of $\nabla_x$, e.g.,
    the central difference or upwinding scheme.

    Summarizing the above discussions, we come to the following explicit scheme.

    \ms
    \textbf{Algorithm 1}.
    For $k=N,N-1, \cdots,0$, compute the coefficients $\alpha^k,\beta^k$ by the formulas \eqref{eq:alphatN},

    \eqref{eq:alphatkexp} and \eqref{eq:beta}. Then compute $(\hat{Y}_{t_k}, \hat{Z}_{t_k})$ (resp. $(v^k,w^k)$) as in \eqref{eq:YZ} (resp. \eqref{eq:vw}).
\ms

%\textcolor{blue}
%that one can interpret Algorithm 1 as a special case of the quantization algorithm  in  \cite{bally2003quantization} 
%with the mid-point of a given grid as the chosen quantizer. 
%However, we emphasize that in the work we construct 
{We remark that 
the above scheme was derived under the concept of transposition solutions. 
As it is seen later, this helps us interpret it as an operator-splitting approximation 
of the variation of constants formula \eqref{eq:inteq}, as well as 
derive high-order schemes for general BSDEs.}

\ms
\textbf{Case 2}: the driver $g$ involves the component $Z$.
We shall propose a hybrid scheme and an implicit scheme for solving BSDE \eqref{eq:BSDE} and PDE \eqref{eq:semilinear}.

We first derive the hybrid scheme by using the left endpoint quadrature rule in  \eqref{eq:alpha} to get
    \begin{align}\label{eq:gimp}
    %\begin{split}
    &\ex[\int_{t_k}^{t_{k+1}}p^{-1}_{X_{t_k}}(X_{t_k})h_i(X_{t_k})g(t_\tau,X_\tau,Y_\tau,Z_\tau)\,d\tau]
    \approx \ex[p^{-1}_{X_{t_k}}(X_{t_k})h_i(X_{t_k})g(t_{k},X_{t_{k}},\hat{Y}_{t_{k}},\hat{Z}_{t_{k}})]\Del_{k+1}\nb\\
    =& \ex[p^{-1}_{X_{t_k}}(X_{t_k})h_i(X_{t_k})g(t_{k},X_{t_{k}},\sum_{j\in \Z^d}\frac{\alpha_{k}^j}{\sqrt{|I_j|}} 1_{I_j}(X_{t_{k}}),%\sigma(X_{t_k})
    \sum_{j\in \Z^d}\frac{\beta_{k}^j}{\sqrt{|I_j|}} 1_{I_j}(X_{t_{k}}))]\Del_{k+1}\nb\\
    =&\frac{\Del_{k+1}}{\sqrt{|I_i|}} \int_{I_i}g(t_{k},s,\frac{\alpha_{k}^i}{\sqrt{|I_i|}},%\sigma(s)
    \frac{\beta_{k}^i}{\sqrt{|I_i|}} )\, ds.
    %\end{split}
    \end{align}

    We can compute the coefficients $\beta^k$ explicitly by \eqref{eq:beta}. This, along with
    \eqref{eq:alpha} and \eqref{eq:Pij}, yields a scheme which solves the following equation
    for the coefficient $\alpha^{k}$ (with $P_{ij}$ from \eqref{eq:Pij}):
    \begin{equation}\label{eq:alphatkimp}
    \alpha_k^i=\bigg(\sum_{j\in \Z^d} P^k_{ij}\alpha_{k+1}^j\bigg)-\frac{\Del_{k+1}}{\sqrt{|I_i|}} \int_{I_i}g(t_{k},s,\frac{\alpha_{k}^i}{\sqrt{|I_i|}},%\sigma(s)
    \frac{\beta_{k}^i}{\sqrt{|I_i|}} )\, ds
        \quad \fa i\in \Z^d, k=0,\cdots, N-1\,.
    \end{equation}
%for the coefficient $\alpha^{k}$, where $P_{ij}$ is given by \eqref{eq:Pij}.
%We remark that certain regularity conditions for the driver $g$ shall be imposed to guarantee the existence of a solution to \eqref{eq:alphatkimp}.

    The above discussions lead to the hybrid scheme that is implicit in $Y$ and explicit in $Z$.

    \ms
    \textbf{Algorithm 2}.
    For $k=N,N-1, \cdots,0$, compute the coefficients $\alpha^k,\beta^k$ by the formulas \eqref{eq:alphatN},

    \eqref{eq:alphatkimp} and \eqref{eq:beta}. Then compute $(\hat{Y}_{t_k}, \hat{Z}_{t_k})$ (resp. $(v^k,w^k)$) as in \eqref{eq:YZ} (resp. \eqref{eq:vw}).

    \ms
    Alternatively, replacing $\a^{k+1}$ in \eqref{eq:beta} by $\a^{k}$, along with \eqref{eq:alphatkimp}, suggests us 
    to compute the coefficients $(\a^k, \beta^k)$  for $k=0,\cdots, N-1$ as follows: $\beta^k=\sigma^T D\alpha^k$ and 
    \bb\label{eq:alphabetafullimp}
    \alpha_k^i=\bigg(\sum_{j\in \Z^d} P^k_{ij}\alpha_{k+1}^j\bigg)-\frac{\Del_{k+1}}{\sqrt{|I_i|}} \int_{I_i}g(t_{k},s,\frac{\alpha_{k}^i}{\sqrt{|I_i|}},%\sigma(s)
    \frac{\beta_{k}^i}{\sqrt{|I_i|}} )\, ds,\q i\in \Z^d\,.
    %\beta^k&=D\alpha^k,
    \ee
    This leads to the following implicit scheme.

    \ms
    \textbf{Algorithm 3}.
    For $k=N,N-1, \cdots,0$, compute the coefficients $\alpha^k,\beta^k$ by the formulas \eqref{eq:alphatN}

    and \eqref{eq:alphabetafullimp}. Then compute $(\hat{Y}_{t_k}, \hat{Z}_{t_k})$ (resp. $(v^k,w^k)$) as in \eqref{eq:YZ} (resp. \eqref{eq:vw}).

    \ms
    We end this section with an important remark about some differences and similarities among all these algorithms proposed above.
    %To achieve the best balance between the numerical stability and the computational cost, different schemes shall be employed based on the structure of the drive $g$.
    Based on the structure of the driver $g$, we shall employ different schemes to achieve the better balance between the numerical stability and the computational costs.
    For FBSDEs whose drivers are Lipschitz in $Y$ but independent of $Z$, the explicit Algorithm 1 shall be adopted. The hybrid Algorithm 2 is for FBSDEs whose drivers are Lipschitz in $Z$ with a Lipschitz or monotone dependence on $Y$, while the implicit Algorithm 3 can be used for the general case, in particular for FBSDEs whose drivers are monotone in $Y$  and locally Lipschitz and of polynomial growth in $Z$. In the next section, we shall perform a careful convergence analysis
for the first two algorithms.
% 1 and 2 with FBSDEs monotone in $Y$ in Section \ref{sec:alog2mono}.
%The convergence of our schemes for general BSDEs  will be analyzed in the forthcoming papers.

    Apart from the aforementioned differences, as we will see from our analyses in Section \ref{section:conv}, the constructions of our above schemes can be interpreted as a combination of  difference approximations in time and operator-splitting methods for the variation of constants formula \eqref{eq:inteq}. This interpretation enables us to derive systematically high-order schemes based on high-order splitting of the variation of constants formula. Moreover, in all these numerical algorithms we use $P^k$ to represent the semigroup $S(\Del_{k+1})$, which in general refers to the semigroup associated with a Markov process; see e.g. \cite{varadhan2007stochastic}. An empirical point of view is that for a Markov process $X_t$ with state space $E$ and a partition $\{E_k\}_{k=1}^\infty$ of $E$, i.e., $\bigcup_{k=1}^\infty E_k=E$, we have the transition probability $P^k_{ij}=P(X_{\Delta_{k+1}} \in E_j|X_0 \in E_i)$.
 
\color{black}   
We finally remark that in this work we focus our attention on the numerical methods for the FBSDEs whose variation of constants formulas admit differentiable mild solutions; see \cite{ito2002evolution} and \cite{yong1999stochastic} for sufficient conditions. This motivates us to propose difference schemes to approximate the martingale process $Z$ based on the the nonlinear Faynman-Kac formula. For general FBSDEs, one may construct numerical schemes  based on Malliavin  Monte Carlo weights to approximate  the $Z$ process as suggested in \cite{bouchard2004discrete} and \cite{zhang2004numerical}.
\color{black}

\section{Convergence analysis}\label{section:conv}
In this section, we recall the concept of a mild solution to the PDE \eqref{eq:semilinear} and establish the convergence of our numerical solution $v^k_h$ to the mild solution $v$. We will analyze the convergence of our Algorithms 1 and 2 introduced in Section \ref{section:method}, for the case where the driver $g$ is independent of the component $Z$, and satisfies a uniformly Lipschitz continuity or maximal monotonicity in the component $Y$, respectively. For both cases, we shall perform the convergence analysis first for the temporal discrete scheme and then for the fully discrete scheme.
%Finally we will point out that by the Girsanov theorem,  to generalize our result to the case with generator $g$, which depends on $z$ linearly.

We start the discussion by deriving a variation of constants formula for \eqref{eq:BSDE} from the variational identity \eqref{eq:variational}. For the Markovian case, i.e., $Y_T=\phi(X_T)$, we have $Y_t=v(t,X_t)$ and $Z_t=\b(t,X_t)$ for $t\in [0,T)$. Let $h$ be an arbitrary bounded integrable function on $\R^d$ and $p_{X_t}$ be the probability density function of the Markov process $X_t$.

Choosing
$u=0$, $\eta=0$ and $w= p^{-1}_{X_t}(X_t)h_i(X_t)$ in \eqref{eq:SDE},
we have
$S_\tau =p^{-1}_{X_t}(X_t)h(X_t)(B_\tau-B_t)$, and then derive from
\eqref{eq:variational} that
\begin{align*}
\b(t,x)h(x)&\approx \ex[p^{-1}_{X_t}(X_t)h(X_t)v(\tau,X_\tau)(B_\tau-B_t)]\\
&\approx \ex[p^{-1}_{X_t}(X_t)h(X_t)\p_x v(\tau,X_\tau)(dX_\tau)(dB_\tau)]=\sigma(x)^T\nabla_x v(t,x)h(x)+o(|\tau-t|),
\end{align*}
consequently we see $\beta(t,x)=\sigma(x)^T\nabla_x v(t,x)$ and $Z_t=\sigma(X_t)^T \nabla_x v(t,X)$ for $t\in [0,T)$.

Similarly, we get $S_\tau =p^{-1}_{X_t}(X_t)h(X_t)$ by choosing $u=w=0$
and $\eta= p^{-1}_{X_t}(X_t)h_i(X_t)$) in \eqref{eq:SDE}, then we can derive from \eqref{eq:variational} that
\begin{align*}
v(t,x)h(x)&=\ex[p^{-1}_{X_t}(X_t)h(X_t)Y_s]-\ex[\int_t^s p^{-1}_{X_t}(X_t)h_i(X_t)g(t_\tau,X_\tau,Y_\tau,Z_\tau)\,d\tau]\\
&=\bigg(S(s-t)v(s,x)-\int_t^s S(\tau-t)g(\tau,x,v(\tau,x),\sigma(x)^T\nabla_x v(\tau,x)))\,d\tau\bigg) h(x),
\end{align*}
where $\{S(t)\}_{t\ge 0}$ is the transition semigroup associated with $X_t$ defined as in \eqref{eq:diffsgp}. Therefore, we infer $v$ satisfies the integral equation: ~$v(T,x)=\phi(x)$ for $x\in \R^d$, and for all $t,s\in [0,T),s\le t$,
\begin{equation}\label{eq:inteq}
%\begin{aligncases}
 v(s,x)=S(t-s)v(t,x)-\int_s^t S(\tau-s)g(\tau,x,v(\tau,x),\sigma(x)^T \nabla_x v(\tau,x))\,d\tau, 
% v(T,x)&=\phi(x).
%\end{aligncases}
\end{equation}

The integral equation \eqref{eq:inteq} is  called a variation of constants formula for \eqref{eq:BSDE} and a function $v$ satisfying \eqref{eq:inteq} is said to be a mild solution to \eqref{eq:semilinear}. Furthermore, we refer to \cite{pazy2012semigroups} for the sufficient conditions under which a mild solution is a classical solution to \eqref{eq:semilinear}, and to \cite{ito2002evolution,ito2008feedback} for the applications of mild solutions on numerical analysis and optimal control.
%\color{blue}
We remark that similar integral representation of the solution to \eqref{eq:BSDE} has been established in \cite{ma2002representation,pardoux2014stochastic} through a probabilistic argument. 
%\color{black}

In the rest of this section, we assume the existence and uniqueness of the mild solution $v$ and establish the convergence of our numerical solution to $v$. For the sake of simplicity, we only consider the case with a uniform partition of $[0,T]$ (resp. $\R^d$) with time stepsize $\Del t$ (resp. with mesh size $h$), but the results can be easily generalized to a non-uniform regular partition.

The following discrete Gronwall inequality will be used frequently in our subsequent analysis.
\begin{Lemma}%[discrete Gronwall inequality]
If three nonnegative sequences  $\{a_i\}$, $\{b_i\}$ and $\{c_i\}$ satisfy that
$a_{i-1}\le (1 + c_i)a_i + b_i$ for $i \ge 1$, then it holds for $n\ge 0$ that
$$\max_{0\le i\le n}a_n\le \exp \big(\sum_{i=1}^n c_i\big)\big(a_n+\sum_{i=1}^n b_i\big).$$
\end{Lemma}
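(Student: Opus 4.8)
The statement is a standard backward-in-time discrete Gronwall inequality: from the one-step estimate $a_{i-1} \le (1+c_i)a_i + b_i$ we want to bound $\max_{0\le i\le n} a_i$ (I read the "$a_n$" on the left of the conclusion as a typo for $a_i$, since otherwise the max is vacuous) by $\exp\big(\sum_{i=1}^n c_i\big)\big(a_n + \sum_{i=1}^n b_i\big)$. The natural approach is a straightforward induction/telescoping argument, iterating the recursion from index $i$ down to... well, actually upward to $n$, since the recursion expresses the smaller-index term in terms of the larger-index one.

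\textbf{Step 1 (iterate the recursion).} Fix $i$ with $0\le i\le n$. Applying the hypothesis repeatedly,
\[
a_i \le (1+c_{i+1})a_{i+1} + b_{i+1} \le (1+c_{i+1})(1+c_{i+2})a_{i+2} + (1+c_{i+1})b_{i+2} + b_{i+1} \le \cdots,
\]
and by a finite induction on $n-i$ one obtains
\[
a_i \le \Big(\prod_{j=i+1}^{n}(1+c_j)\Big) a_n + \sum_{k=i+1}^{n}\Big(\prod_{j=i+1}^{k-1}(1+c_j)\Big) b_k,
\]
with the usual convention that an empty product equals $1$.

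\textbf{Step 2 (bound the products).} Since all $c_j\ge 0$, the elementary inequality $1+x\le e^x$ gives $\prod_{j=i+1}^{k-1}(1+c_j)\le \exp\big(\sum_{j=i+1}^{k-1}c_j\big)\le \exp\big(\sum_{j=1}^{n}c_j\big)$ for every $k\le n$, and likewise $\prod_{j=i+1}^{n}(1+c_j)\le \exp\big(\sum_{j=1}^{n}c_j\big)$. Substituting these bounds into the estimate from Step 1 and pulling the common factor $\exp\big(\sum_{j=1}^n c_j\big)$ out yields
\[
a_i \le \exp\Big(\sum_{j=1}^{n}c_j\Big)\Big(a_n + \sum_{k=i+1}^{n} b_k\Big) \le \exp\Big(\sum_{j=1}^{n}c_j\Big)\Big(a_n + \sum_{k=1}^{n} b_k\Big),
\]
where the last step uses $b_k\ge 0$. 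Since the right-hand side is independent of $i$, taking the maximum over $0\le i\le n$ gives the claim.

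There is essentially no hard part here — the only things to be careful about are the empty-product convention (needed so that the formula in Step 1 is correct when $i=n$, giving just $a_n\le a_n$) and the direction of the induction (the recursion is "backward", so we iterate from $i$ up to the fixed endpoint $n$). I would present it as a short induction on $n-i$ for the identity in Step 1 and then the two-line estimate in Step 2.
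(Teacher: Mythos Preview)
Your proof is correct and is the standard telescoping/induction argument for the backward discrete Gronwall inequality; the paper itself states this lemma without proof as a standard auxiliary result, so there is nothing to compare against. Your observation that the ``$a_n$'' inside the $\max$ should be ``$a_i$'' is also right, and your handling of the empty-product convention at $i=n$ is exactly what is needed.
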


%\begin{proof}
%The lemma follows from an easy inductive argument, the details of which we omit here.
%\end{proof}
\subsection{Convergence  for a uniformly Lipschitz continuous driver $g$}\label{sec:alog1lip}
In this section, we demonstrate
that our numerical solution from Algorithm 1 provides a good approximation to the mild solution $v(t)$
for a Lipschitz continuous driver $g$. We shall first perform the convergence analysis for  the semi-discrete scheme
and then for the fully discrete scheme.

\subsubsection{Convergence of the semi-discrete scheme}
We shall carry our analysis under the following assumptions:

\newpage
\textbf{Assumption 1}.
\begin{enumerate}[{(a)}]
\item The family $\{S(t)\}_{0\le t\le T}$ of operators  is a strongly continuous semigroup of bounded linear operators on $L^2(\R^d)$,
i.e.,  $\lim_{t\to 0}\norm{S(t)f-f}{}=0$ for all $f\in L^2(\R^d).$
\item The terminal condition $\phi\in L^2(\R)$ and $h(\cdot)\coloneqq g(0,\cdot,0)\in L^2(\R^d)$.
\item There exists a unique mild solution $v\in C([0,T];L^2(\R^d))$ to \eqref{eq:inteq}.
\end{enumerate}
Also, we consider the driver $g$ to be uniformly Lipschitz continuous.

\ms
\textbf{Assumption 2}.
 There exists a constant $L>0$ such that
    $$\norm{g(t_1,\cdot,u(\cdot))-g(t_2,\cdot,v(\cdot))}{}\le L(|t_1-t_2|+\norm{u-v}{})~~\quad \fa (t_1,u),(t_2,v)\in [0,T]\times L^2(\R^d).$$

For our subsequent analysis, we introduce a semi-discrete version of
Algorithm 1 (Section \ref{section:method}):
%For any $N\in \N$, let $0=t_0<t_1<\cdots<t_N=T$ be a uniform partition of $[0,T]$ with time stepsize $\size{t}=\frac{T}{N}$, we consider Algorithm 1 with only the temporal discretization
%
\begin{equation}\label{eq:timediscretelip}
 v^n=S(\size{t})T(\size{t})v^{n+1}, \q n=0,\cdots,N-1; \q
 v^N=\phi,
\end{equation}
where operator $T(\size{t})$ is defined explicitly for any given $v\in L^2(\R^d)$:
\begin{equation}\label{eq:Tlip}
T(\size{t})v(x)\coloneqq v(x)-\size{t}g(t_{n+1},x,v(x))\q   \textnormal{for \it{a.e.} $x\in \R^d$}.
\end{equation}

For each $n=0,\cdots, N-1$, the local truncation error of the scheme \eqref{eq:timediscretelip}
at $t_{n+1}$ is defined by
\begin{equation}\label{eq:localerr}
R^{n+1}_{\size{t}}(x)=v(t_n,x)-S(\size{t})T(\size{t})v(t_{n+1},x)\q  \textnormal{for \it{a.e.} $x\in \R^d$}.
\end{equation}

The following lemma illustrates the Lipschitz property of operator $T(\size{t})$, which can be verified
readily by using the Lipschitz continuity of $g$.
%which is of crucial importance for the stability of our scheme.
\begin{Lemma}\label{thm:T1}
Under Assumption 2, for any given $\size{t}>0$,
the operator $T(\size{t})$ is a Lipschitz continuous operator from $L^2(\R^d)$ into $L^2(\R^d)$ in the sense that
$$\norm{T(\size{t})v-T(\size{t})w}{}\le (1+L \size{t})\norm{v-w}{} ~~\quad \fa v,w\in L^2(\R^d).$$
%where $L$ is the Lipschitz constant of $g$.
\end{Lemma}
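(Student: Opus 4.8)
The plan is to prove the Lipschitz estimate for $T(\size t)$ directly from the definition \eqref{eq:Tlip} together with Assumption~2. Given $v,w\in L^2(\R^d)$ and $\size t>0$, I would start by writing out the difference pointwise:
\[
T(\size t)v(x)-T(\size t)w(x)=\big(v(x)-w(x)\big)-\size t\,\big(g(t_{n+1},x,v(x))-g(t_{n+1},x,w(x))\big)\qquad\text{for a.e. }x\in\R^d.
\]
Taking $L^2(\R^d)$ norms and applying the triangle inequality then gives
\[
\norm{T(\size t)v-T(\size t)w}{}\le\norm{v-w}{}+\size t\,\norm{g(t_{n+1},\cdot,v(\cdot))-g(t_{n+1},\cdot,w(\cdot))}{}.
\]
The remaining step is to bound the second term on the right by $L\size t\,\norm{v-w}{}$: this is exactly Assumption~2 applied with $t_1=t_2=t_{n+1}$, which yields $\norm{g(t_{n+1},\cdot,v(\cdot))-g(t_{n+1},\cdot,w(\cdot))}{}\le L\norm{v-w}{}$. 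Combining the two displays gives $\norm{T(\size t)v-T(\size t)w}{}\le(1+L\size t)\norm{v-w}{}$, which is the claim.

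Since Assumption~2 is phrased for maps $u,v\in L^2(\R^d)$ directly, the only minor point to check is that $x\mapsto g(t_{n+1},x,v(x))$ indeed lies in $L^2(\R^d)$ whenever $v$ does, so that the norm on the right-hand side is finite and the manipulations are legitimate; this follows from Assumptions~1(b) and~2, since
\[
\norm{g(t_{n+1},\cdot,v(\cdot))}{}\le\norm{g(t_{n+1},\cdot,v(\cdot))-g(0,\cdot,0)}{}+\norm{g(0,\cdot,0)}{}\le L\big(t_{n+1}+\norm{v}{}\big)+\norm{h}{}<\infty.
\]
There is no real obstacle here — the estimate is an immediate consequence of the definition of $T(\size t)$ and the Lipschitz hypothesis on $g$; the slight bookkeeping about well-definedness in $L^2(\R^d)$ is the only thing worth a sentence, and even that is routine given the standing assumptions.
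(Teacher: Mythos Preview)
Your argument is correct and matches the paper's approach exactly: the paper does not even spell out a proof, merely noting that the lemma ``can be verified readily by using the Lipschitz continuity of $g$,'' which is precisely the triangle-inequality computation you carry out. Your extra sentence verifying that $g(t_{n+1},\cdot,v(\cdot))\in L^2(\R^d)$ via Assumptions~1(b) and~2 is a welcome clarification that the paper omits.
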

%
%\begin{proof}
%Fix $t$ and $\size{t}>0$. Given $v,w\in L^2(\R)$, by the definition of $T(\size{t})$,  we have
%\begin{align*}
%|T(\size{t})v(x)-T(\size{t})w(x)|&= |\big( v(x)-\size{t}g(t,x,v(x))\big)-\big( w(x)-\size{t}g(t,x,w(x))\big)|\\
%&\le |v(x)-w(x)|+\size{t}|g(t,x,v(x))-g(t,x,w(x))|.
%\end{align*}
%Hence taking $L^2$ norm on both sides and using the Lipschitz continuity of $g$, we obtain the desired result. We remak that the Lipschitz constant of $T(\size{t})$ is independent of $t$.
%\end{proof}
%

Now we are ready to conclude the consistency and convergence of the scheme \eqref{eq:timediscretelip}.

\begin{Theorem}\label{thm:timelip} Under Assumptions 1 and 2,
the scheme \eqref{eq:timediscretelip} is consistent in the sense that
\begin{equation}\label{eq:timeconsistency}
\sum_{n=0}^{N-1}\norm{R^{n+1}_{\size{t}}}{}\to 0 \q  \textnormal{\it{as} $N\to \infty$}.
\end{equation}
%where $R^{n+1}_{\size{t}}$ is the local truncation error at $t_{n+1}$ defined as in \eqref{eq:localerr}.
Moreover, the solution $v^n$ of the scheme converges to the mild solution $v$ in the sense that
$$\max_{0\le n\le N}\norm{v(t_n)-v^n}{}\to 0\q  \textnormal{\it{as} $N\to \infty$}.$$
\end{Theorem}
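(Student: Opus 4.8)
The plan is to establish consistency first, then use the discrete Gronwall inequality (the Lemma stated just above) together with the Lipschitz bound on $T(\size t)$ from Lemma~\ref{thm:T1} to deduce convergence. For consistency, I would start from the variation of constants formula \eqref{eq:inteq} applied on $[t_n,t_{n+1}]$, namely
\[
v(t_n,x)=S(\size t)v(t_{n+1},x)-\int_{t_n}^{t_{n+1}}S(\tau-t_n)g(\tau,x,v(\tau,x))\,d\tau,
\]
and subtract the scheme's one-step map $S(\size t)T(\size t)v(t_{n+1},x)=S(\size t)v(t_{n+1},x)-\size t\,S(\size t)g(t_{n+1},x,v(t_{n+1},x))$. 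This gives
\[
R^{n+1}_{\size t}(x)=\int_{t_n}^{t_{n+1}}\bigl[S(\size t)g(t_{n+1},x,v(t_{n+1},x))-S(\tau-t_n)g(\tau,x,v(\tau,x))\bigr]d\tau .
\]
I would split the integrand into three pieces: (i) $S(\size t)g(t_{n+1},\cdot,v(t_{n+1}))-S(\tau-t_n)g(t_{n+1},\cdot,v(t_{n+1}))$, controlled by strong continuity of the semigroup (Assumption 1(a)) plus uniform boundedness $\sup_{0\le t\le T}\opnorm{S(t)}{L^2(\R^d)}\le M$; (ii) $S(\tau-t_n)[g(t_{n+1},\cdot,v(t_{n+1}))-g(\tau,\cdot,v(\tau))]$, controlled by $M$ times the Lipschitz bound $L(|t_{n+1}-\tau|+\norm{v(t_{n+1})-v(\tau)}{})$ from Assumption~2; and here $\norm{v(t_{n+1})-v(\tau)}{}\le\omega(\size t)$ where $\omega$ is the modulus of continuity of $v\in C([0,T];L^2(\R^d))$ from Assumption 1(c). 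Summing over $n$ and multiplying by the number of steps $N=T/\size t$, the terms of type (ii) contribute $O(N\cdot\size t\cdot\size t)=O(\size t)\to 0$. The term of type (i) is the delicate one: $\sum_{n=0}^{N-1}\int_{t_n}^{t_{n+1}}M\,\norm{(S(\size t)-S(\tau-t_n))g(t_{n+1},x,v(t_{n+1},x))}{}\,d\tau$. Since $\size t-(\tau-t_n)\in[0,\size t]$, a uniform estimate via a single modulus of continuity does not obviously sum to something small, so I expect this to be the main obstacle (see below).

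For convergence, set $e^n\coloneqq\norm{v(t_n)-v^n}{}$. Subtracting \eqref{eq:timediscretelip} from \eqref{eq:localerr} and using $v^{N}=\phi=v(T)$ gives, for $n=0,\dots,N-1$,
\[
v(t_n,x)-v^n = S(\size t)\bigl[T(\size t)v(t_{n+1},\cdot)-T(\size t)v^{n+1}\bigr](x)+R^{n+1}_{\size t}(x).
\]
Taking $L^2$ norms, using $\opnorm{S(\size t)}{L^2(\R^d)}\le M$ and Lemma~\ref{thm:T1},
\[
e^n\le M(1+L\size t)\,e^{n+1}+\norm{R^{n+1}_{\size t}}{}.
\]
To apply the discrete Gronwall lemma in the form stated ($a_{i-1}\le(1+c_i)a_i+b_i$), I would reindex $a_i\coloneqq e^{N-i}$, $b_i\coloneqq\norm{R^{N-i+1}_{\size t}}{}$, and absorb the factor $M$: write $M(1+L\size t)\le 1+c_i$ for a suitable $c_i$. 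If $M=1$ (the semigroup is a contraction, as is typical for transition semigroups of Itô diffusions, since $S(t)$ is a Markov/sub-Markov semigroup), then $c_i=L\size t$ and $\sum_{i=1}^N c_i\le LT$, so the lemma yields
\[
\max_{0\le n\le N}e^n=\max_{1\le i\le N}a_i\le e^{LT}\Bigl(a_N+\sum_{i=1}^N b_i\Bigr)=e^{LT}\Bigl(e^0+\sum_{n=0}^{N-1}\norm{R^{n+1}_{\size t}}{}\Bigr).
\]
Wait — the lemma's conclusion bounds $\max a_n$ by $a_n$ on the right, but the natural anchor here is the terminal error $a_0=e^N=0$; I would instead apply it in the cleaner telescoped form $\max_i a_i\le\exp(\sum c_i)\sum_i b_i$ starting from $a_0=0$, which follows by the same induction. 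Then $\max_n e^n\le e^{LT}\sum_{n=0}^{N-1}\norm{R^{n+1}_{\size t}}{}\to 0$ by \eqref{eq:timeconsistency}. If $M>1$, one replaces $e^{LT}$ by $\exp(N\log(M(1+L\size t)))$, which blows up unless $M=1$; hence the proof implicitly needs (or should explicitly assume) $M=1$, or at least $M=1+O(\size t)$ — I would flag this and note it holds for transition semigroups.

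The main obstacle, as noted, is showing that the type-(i) piece of the consistency error sums to $o(1)$: the quantity $\sum_{n}\int_{t_n}^{t_{n+1}}\norm{(S(\size t)-S(\tau-t_n))f_n}{}\,d\tau$ with $f_n=g(t_{n+1},\cdot,v(t_{n+1},\cdot))$. One clean way is to observe $S(\size t)-S(\tau-t_n)=S(\tau-t_n)(S(t_{n+1}-\tau)-I)$, so the integrand is $\le M\norm{(S(t_{n+1}-\tau)-I)f_n}{}$, and then to exploit that $\{f_n\}$ is relatively compact in $L^2(\R^d)$ (because $\tau\mapsto g(\tau,\cdot,v(\tau,\cdot))$ is continuous from $[0,T]$ into $L^2(\R^d)$ by Assumption~2 and $v\in C([0,T];L^2)$, hence its range is compact) to get a \emph{uniform} modulus of strong continuity: $\sup_n\norm{(S(s)-I)f_n}{}=:\rho(s)\to 0$ as $s\to0^+$, by the standard equicontinuity argument (finite $\eps$-net plus strong continuity). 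Then the type-(i) sum is $\le M\sum_n\int_{t_n}^{t_{n+1}}\rho(t_{n+1}-\tau)\,d\tau\le MN\size t\,\rho(\size t)=MT\rho(\size t)\to0$. This equicontinuity step is the crux; everything else is routine.
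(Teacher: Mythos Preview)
Your proposal is correct and follows the same overall architecture as the paper: derive the one-step error recursion, apply the discrete Gronwall inequality to reduce convergence to consistency, then split the local truncation error into a ``Lipschitz'' piece and a ``semigroup continuity'' piece. The two decompositions differ only in which intermediate term is inserted (the paper uses $S(\size t)g(\tau,\cdot,v(\tau))$, you use $S(\tau-t_n)g(t_{n+1},\cdot,v(t_{n+1}))$), which is cosmetic. The one substantive difference is in handling the semigroup-continuity piece. The paper rewrites $\sum_n\int_{t_n}^{t_{n+1}}\norm{(S(\size t)-S(\tau-t_n))g(\tau,\cdot,v(\tau))}{}\,d\tau$ as $\int_0^T f(\tau)\,d\tau$ for a step-dependent integrand $f$ and appeals to the Dominated Convergence Theorem: $f(\tau)\to 0$ pointwise by strong continuity, dominated by $C(\tau+\norm{v(\tau)}{})+\norm{g(0,\cdot,0)}{}$. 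You instead exploit compactness of the range of $\tau\mapsto g(\tau,\cdot,v(\tau))$ in $L^2(\R^d)$ to upgrade strong continuity of $S(\cdot)$ to a \emph{uniform} modulus $\rho(s)\to 0$, yielding the explicit bound $MT\rho(\size t)$. Your route is more direct and avoids DCT; the paper's route would generalise more readily if the range were merely bounded rather than compact, but here compactness comes for free from $v\in C([0,T];L^2)$ and Assumption~2.

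Your remark that the stability step requires $M=1$ (or $M=1+O(\size t)$) is well-spotted. The paper sweeps this under the $\lesssim$ notation, writing $\norm{v(t_n)-v^n}{}\lesssim(1+\size t)\norm{v(t_{n+1})-v^{n+1}}{}+\norm{R^{n+1}_{\size t}}{}$ without comment; a literal reading with a hidden constant $C>1$ would not survive iteration. For transition semigroups of It\^{o} diffusions (quasi-contractions on $L^2$) this is harmless, but you are right to flag it.
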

\begin{proof}

%Fix a given $N\in \N$ and $\epsilon>0$. %, we consider the time partition with mesh size $\size{t}=\frac{T}{N}$.
Rearranging the terms in \eqref{eq:localerr} leads to
\begin{equation}\label{eq:trunerr}
v(t_n,x)=S(\size{t})T(\size{t})v(t_{n+1},x)+R^{n+1}_{\size{t}}(x)\q  \textnormal{for \it{a.e.} $x\in \R^d$}.
\end{equation}
Then for almost every $x\in \R^d$, subtracting  \eqref{eq:trunerr} from the scheme \eqref{eq:timediscretelip}, we obtain
\begin{align*}
|v(t_n,x)-v^n(x)|&=|S(\size{t})T(\size{t})v(t_{n+1},x)-S(\size{t})T(\size{t})v^{n+1}(x)+R^{n+1}_{\size{t}}(x)|\\
&\le |S(\size{t})T(\size{t})v(t_{n+1},x)-S(\size{t})T(\size{t})v^{n+1}(x)|+|R^{n+1}_{\size{t}}(x)|.
\end{align*}
Taking $L^2$-norm on both sides of the above expression and using the boundedness of $S(t)$ and Lemma \ref{thm:T1}, we have the following estimate of the numerical error:
%between the numerical solution and the mild solution
\begin{equation}\label{eq:err}
\norm{v(t_n)-v^n}{}\lesssim (1+\size{t})\norm{v(t_{n+1})-v^{n+1}}{}+\norm{R^{n+1}_{\size{t}}}{},\q n=0,1\cdots, N-1,
\end{equation}
from which, the discrete Gronwall inequality and the condition $v(T)=v^N$ it follows that
\begin{align*}
\max_{0\le n\le N}\norm{v(t_n)-v^n}{}\lesssim \norm{v(T)-v^N}{}+\sum_{n=0}^{N-1}\norm{R^{n+1}_{\size{t}}}{}=\sum_{n=0}^{N-1}\norm{R^{n+1}_{\size{t}}}{}.
\end{align*}

Now we can easily see that it suffices to establish the consistency \eqref{eq:timeconsistency}
in order to conclude the convergence of \eqref{eq:timediscretelip}.
From \eqref{eq:inteq}, \eqref{eq:timediscretelip} and \eqref{eq:localerr}, we deduce for almost every $x\in\R^d$ that
\begin{align*}
R^{n+1}_{\size{t}}(x)=&\size{t}S(\size{t})g(t_{n+1},x,v(t_{n+1},x))-\int_{t_n}^{t_{n+1}}S(\tau-t_n)g(\tau,x,v(\tau,x))\,d\tau\\
=&\int_{t_n}^{t_{n+1}}S(\size{t})\big(g(t_{n+1},x,v(t_{n+1},x))-g(\tau,x,v(\tau,x))\big)\,d\tau\\
&+\int_{t_n}^{t_{n+1}}\big(S(\size{t})-S(\tau-t_n)\big)g(\tau,x,v(\tau,x))\,d\tau\\
\le & \int_{t_n}^{t_{n+1}}S(\size{t})\big(|g(t_{n+1},x,v(t_{n+1},x))-g(\tau,x,v(\tau,x))|\big)\,d\tau\\
&+\int_{t_n}^{t_{n+1}}\big(S(\size{t})-S(\tau-t_n)\big)|g(\tau,x,v(\tau,x))|\,d\tau\\
\coloneqq & A_n(x)+B_n(x).
\end{align*}
%where we write
%\begin{align*}
%A_n(x)&=\int_{t_n}^{t_{n+1}}S(\size{t})\big(|g(t_{n+1},x,v(t_{n+1},x))-g(\tau,x,v(\tau,x))|\big)\,d\tau,\\
%B_n(x)&=\int_{t_n}^{t_{n+1}}\big(S(\size{t})-S(\tau-t_n)\big)|g(\tau,x,v(\tau,x))|\,d\tau.
%\end{align*}

Using the inequality for Bochner integral, the boundedness of $S(t)$ and the Lipschitz continuity  of $g$, we obtain the following estimates:
\begin{align*}
\norm{A_n}{}&\le \int_{t_n}^{t_{n+1}}\norm{S(\size{t})\big|g(t_{n+1},\cdot,v(t_{n+1},\cdot))-g(\tau,\cdot,v(\tau,\cdot))\big|}{}\,d\tau\\
&\lesssim\int_{t_n}^{t_{n+1}} |t_{n+1}-\tau|+\norm{v(t_{n+1})-v(\tau)}{}\,d\tau,\\
\norm{B_n}{}&\le\int_{t_n}^{t_{n+1}}\norm{\big(S(\size{t})-S(\tau-t_n)\big)|g(\tau,\cdot,v(\tau,\cdot))|}{}\,d\tau.
\end{align*}

Since $\{S(t)\}_{t\ge 0}$ is a strongly continuous semigroup on $L^2(\R^d)$ and the mild solution $v\in C([0,T];L^2(\R^d))$, for any given $\epsilon>0$, there exists $0<t_0<\epsilon$ independent of $N$, such that for any $\size{t}<t_0$, we have
$$\norm{v(t_{n+1})-v(\tau)}{}< \epsilon,\q \fa \tau\in [t_n,t_{n+1}].$$
We can immediately obtain from the above estimates that for any $\size{t}<t_0$,
\begin{align*}
\sum_{n=0}^{N-1}\norm{R^{n+1}_{\size{t}}}{}\le \sum_{n=0}^{N-1}\big(\norm{A_n}{}+\norm{B_n}{}\big)
\lesssim \epsilon T+ \sum_{n=0}^{N-1}\int_{t_n}^{t_{n+1}}\norm{\big(S(\size{t})-S(\tau-t_n)\big)|g(\tau,\cdot,v(\tau,\cdot))|}{}\,d\tau,
\end{align*}
from which it suffices for us to establish
\begin{equation}\label{eq:flip}
 \sum_{n=0}^{N-1} \int_{t_n}^{t_{n+1}}\norm{\big(S(\size{t})-S(\tau-t_n)\big)|g(\tau,\cdot,v(\tau,\cdot))|}{}\,d\tau=\int_0^T f(\tau)\, d\tau\to 0 \q \textrm{as}\; N\to \infty,
 \end{equation}
 where $f(\tau)\coloneqq \sum_{n=0}^{N-1} \norm{\big(S(\size{t})-S(\tau-t_n)\big)|g(\tau,\cdot,v(\tau,\cdot))|}{}1_{[t_n,t_{n+1})}(\tau)$ for $\tau\in [0,T]$.
 In fact, As $v\in C([0,T];L^2(\R^d))$ and $S(t)$ are bounded, we obtain for each $\tau\in[0,T]$ that
 \begin{align*}
 f(\tau)&\lesssim \norm{g(\tau,\cdot,v(\tau,\cdot))}{}\le  \norm{g(\tau,\cdot,v(\tau,\cdot))-g(0,\cdot,0)}{}+\norm{g(0,\cdot,0)}{}\le L( \tau+\norm{v(\tau)}{})+\norm{g(0,\cdot,0)}{}.
 \end{align*}
 Moreover, for any fixed $\tau\in [0,T]$, we know $\tau\in [t_n,t_{n+1}]$ for some $n$ and $g(\tau,\cdot,v(\tau,\cdot)\in L^2(\R^d)$. Also we note that $t_{n+1}-\tau\le \size{t}$,
 hence it  follows from the strong continuity of $S(t)$ that $f(\tau)\to 0$ as $N\to \infty$. Now the Dominated Convergence Theorem concludes \eqref{eq:flip} directly.
\end{proof}

\subsubsection{Convergence of the fully discrete scheme}

Now we proceed to investigate the convergence of our fully discrete Algorithm 1 to the mild solution $v$ satisfying the variation of constants formula \eqref{eq:inteq}. Given a uniform spatial partition $\{I_j\}_{j\in\Z^d}$ of  $\R^d$ with mesh size $h$, %i.e. $x_{j+1}-x_{j}=h$ for all $j\in \Z$,
we define the projection operator $P_h$ mapping $f\in L^2(\R^d)$ to the piecewise constant function $P_hf$ by
$$P_hf(x)=\sum_{j\in \Z^d} \alpha_j1_{I_j}(x),\q  x\in \R^d, \q \textrm{with}\q \alpha_j=\alpha_j(f)=\frac{1}{|I_j|}\int_{I_j}f(x)\,dx ~~\q \fa j\in \Z^d.$$

Since $\{I_j\}_{j\in\Z^d}$ are disjoint hypercubes, $P_hf$ is well-defined. And
it is easy to verify by the definition that
%The following proposition shows some basic properties for the operator $P_h$, which can be directly verified by its definition.
%
%\begin{Proposition}
the projection operators $\{P_h\}_{h>0}$ are positive and satisfy
$$\norm{P_h}{\gene(L^2(\R^d))}\le 1 ~~\q \fa h>0.$$
%\end{Proposition}
%\begin{proof}
%The fact that $\{P_h\}_{h>0}$ is positive and linear is clear from the definition. For the boundedness, note  that $\{h_j\}_{j\in\Z}$ are orthogonal with respect to the $L^2$ inner product $\inprod{\cdot}{\cdot}$, i.e., $\inprod{h_i}{h_j}=0$ for $i\not=j$. Thus for a given $f\in L^2(\R)$, we have
%\begin{align*}
%\norm{P_hf}{}^2&=\inprod{\sum_{i\in \Z} \alpha_ih_i}{\sum_{j\in \Z}\alpha_jh_j}=\sum_{i\in \Z}\norm{\alpha_ih_i}{}^2\\
%&=\sum_{i\in \Z}\alpha_i^2\norm{h_i}{}^2=\sum_{i\in \Z}\alpha_i^2h=\sum_{i\in \Z}\big(\dfrac{1}{h}\int_{x_j}^{x_{j+1}}f(x)\,dx\big)^2h=\sum_{i\in \Z}\dfrac{1}{h}\big(\int_{x_j}^{x_{j+1}}f(x)\,dx\big)^2.
%\end{align*}
%Hence the Cauchy-Schwartz inequality implies that
%\begin{align*}
%\norm{P_hf}{}^2&=\sum_{i\in \Z}\dfrac{1}{h}\big(\int_{x_j}^{x_{j+1}}f(x)\,dx\big)^2\le \sum_{i\in \Z}\int_{x_j}^{x_{j+1}}f(x)^2\,dx=\norm{f}{}.
%\end{align*}
%%Thus for any given $f\in L^2(\R)$, we have $\norm{P_hf}{}\le $, which completes the proof.
%\end{proof}

With this projection operator, we now define two operators for any given $h>0$ and $\size{t}>0$:
\begin{equation}\label{eq:discreteop}
S_h(\size{t})=P_hS(\size{t}),\quad T_h(\size{t})=P_hT(\size{t})P_h,
\end{equation}
where $S(\size{t})$ is the transition semigroup associated with $X_t$ defined as in \eqref{eq:diffsgp} and $T(\size{t})$ is the explicit Euler scheme operator defined as in \eqref{eq:Tlip}.

Then our fully discrete Algorithm 1 in Section \ref{section:method} can be reformulated  as:
\begin{equation}\label{eq:fulldiscretelip}
 v^n_h=S_h(\size{t})T_h(\size{t})v_h^{n+1},\quad n=0,\cdots,N-1;\q
 v^N_h=P_h\phi.
\end{equation}

We further require the mild solution satisfies a stronger regularity in order to study the convergence of the scheme \eqref{eq:fulldiscretelip}.

\ms
\textbf{Assumption 3}.

There exists a unique mild solution $v\in C([0,T];L^2(\R^d))\cap B([0,T];H^1(\R^d))$ to \eqref{eq:semilinear}.

%%\textbf{Assumption 3}.
%\begin{enumerate}[{(a)}]
%\item The process $X_t$ has a transition density $p(t,x,y)$, which is continuously differentiable with respect to $y$. Moreover, %there exist functions $\phi_1,\phi_2$ such that
%$$p(t,x,y)\le \phi_1(t,x,y),\quad |p_y(t,x,y)|\le \phi_2(t,x,y),\quad \fa (t,x,y)\in(0,T]\times \R\times \R,$$
%and the following properties hold for any $t,x,y$:
%\begin{enumerate}[{(1)}]
%\item There exists $\delta>0$, such that for any $(t,x)$, $\phi_1(t,x,y), \phi_2(t,x,y)$ decays w.r.t $|y-x|$ once %$|y-x|>\delta$,
%\item $\phi_1(t,x,y)\le \frac{M}{\sqrt{t}}$, and $\phi_2(t,x,y)\le \frac{M}{t}$,
%\item $$\int_\R \phi_1(t,x,y)\, dx+\int_\R \phi_1(t,x,y)\, dy\le M,$$ and
%$$\int_\R \phi_2(t,x,y)\, dx+\int_\R \phi_2(t,x,y)\, dy\le \frac{M}{\sqrt{t}}.$$
%\end{enumerate}
%
%\item There exists a unique mild solution $v\in C([0,T];X)$ to \eqref{eq:semilinear} satisfing
%$$\sup_{t\in [0,T]}\norm{v(t,\cdot)}{H^1(\R)}<\infty.$$
%\end{enumerate}

Our main result of this section is the following convergence theorem for  Algorithm 1.

%Following the standard notation, we use $\size{t}=\Th(h)$ as $h,\size{t}\to 0$ to denote there exist $c_1,c_2>0$ and $\delta_0>0$ such that
%$$c_1 h\le \size{t}\le c_2 h,\q \fa \size{t},h<\delta_0.$$
\begin{Theorem}\label{thm:fullconvlip}
Under Assumptions 1 and 3, the solution of the scheme \eqref{eq:fulldiscretelip} converges to the mild solution $v$ of \eqref{eq:semilinear} in the sense that
$$\max_{0\le n\le N}\norm{v(t_n)-v_h^n}{}\to 0\q \textnormal{\it{as} $\size{t}\to 0$ \it{and} $h=O(\size{t}^\a)$ \it{for some} $\a>0$}.$$

\end{Theorem}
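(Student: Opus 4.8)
The plan is to bootstrap from the semi-discrete convergence already obtained in Theorem~\ref{thm:timelip}, comparing the fully discrete iterates with the exact mild solution through the intermediate quantity $P_hv(t_n)$. First I would split the error as $v(t_n)-v_h^n=\rho_h^n+\theta^n$, where $\rho_h^n\coloneqq(I-P_h)v(t_n)$ and $\theta^n\coloneqq P_hv(t_n)-v_h^n\in\range P_h$ is piecewise constant. The projection part is controlled by the cellwise Poincar\'e inequality, $\|(I-P_h)f\|\lesssim h\,\|f\|_{H^1(\R^d)}$ for $f\in H^1(\R^d)$, so that Assumption~3 gives $\max_{0\le n\le N}\|\rho_h^n\|\lesssim h\,\|v\|_{B([0,T];H^1(\R^d))}\to0$ as $h\to0$. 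It then suffices to prove $\max_{0\le n\le N}\|\theta^n\|\to0$ in the regime $\size{t}\to0$, $h=O(\size{t}^\a)$.

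For the recursion I would start from the rearranged definition \eqref{eq:localerr} of the semi-discrete local truncation error, $v(t_n)=S(\size{t})T(\size{t})v(t_{n+1})+R^{n+1}_{\size{t}}$, and from the reformulation \eqref{eq:fulldiscretelip} of Algorithm~1, which (using $P_hv_h^{n+1}=v_h^{n+1}$) reads $v_h^n=P_hS(\size{t})P_hT(\size{t})v_h^{n+1}$. Subtracting, applying $P_h$ to the first identity, and inserting $\pm\,P_hv(t_{n+1})$ and $\pm\,P_hg(t_{n+1},\cdot,v(t_{n+1}))$, a short computation using $\|P_h\|\le1$, the boundedness of $S(\cdot)$, the Lipschitz bound for $T(\size{t})$ with constant $1+L\size{t}$ (Lemma~\ref{thm:T1}), and the Lipschitz continuity of $g$ in its function argument (Assumption~2) yields, for $n=0,\dots,N-1$,
\[
\|\theta^n\|\le(1+C\size{t})\,\|\theta^{n+1}\|+C\,\|(I-P_h)v(t_{n+1})\|+C\,\size{t}\,\|(I-P_h)g(t_{n+1},\cdot,v(t_{n+1}))\|+\|R^{n+1}_{\size{t}}\| .
\]
Since $v_h^N=P_h\phi=P_hv(T)$, we have $\theta^N=0$, so the discrete Gronwall inequality gives
\[
\max_{0\le n\le N}\|\theta^n\|\lesssim\sum_{n=0}^{N-1}\|R^{n+1}_{\size{t}}\|+\sum_{n=0}^{N-1}\|(I-P_h)v(t_{n+1})\|+\sum_{n=0}^{N-1}\size{t}\,\|(I-P_h)g(t_{n+1},\cdot,v(t_{n+1}))\| .
\]

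It remains to show that each of these three sums vanishes. The first is exactly the consistency \eqref{eq:timeconsistency} proved in Theorem~\ref{thm:timelip}, hence it tends to $0$ as $\size{t}\to0$. The third is a Riemann sum for $\int_0^T\|(I-P_h)g(t,\cdot,v(t))\|\,dt$: since $t\mapsto g(t,\cdot,v(t))$ is continuous from $[0,T]$ into $L^2(\R^d)$ by Assumption~2 and $v\in C([0,T];L^2(\R^d))$ (so the Riemann-sum error has a modulus of continuity independent of $h$), while $\|(I-P_h)g(t,\cdot,v(t))\|\to0$ pointwise in $t$ by the strong convergence $P_h\to I$ and is dominated by the constant $\sup_{0\le t\le T}\|g(t,\cdot,v(t))\|<\infty$, the Dominated Convergence Theorem forces this integral, and hence the Riemann sum, to $0$ as $h\to0$ — the same device as at the end of the proof of Theorem~\ref{thm:timelip}. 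The second sum is the delicate one: the projection estimate and Assumption~3 give $\sum_{n=0}^{N-1}\|(I-P_h)v(t_{n+1})\|\le N\,C h\,\|v\|_{B([0,T];H^1(\R^d))}=CT\,(h/\size{t})\,\|v\|_{B([0,T];H^1(\R^d))}$, which tends to $0$ precisely because $h=O(\size{t}^\a)$ forces $h/\size{t}\to0$. Together with the bound on $\rho_h^n$, this proves the theorem.

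The main obstacle is this last sum, and it is what dictates the stepsize coupling. Unlike a temporal truncation error, the local spatial error introduced by the projections is $O(h)$ rather than $O(h\,\size{t})$ — the map $T(\size{t})v=v-\size{t}\,g(\cdot,v)$ carries an undamped ``identity'' part whose projection error is not multiplied by $\size{t}$ — so accumulating $N\sim T/\size{t}$ such contributions produces an $O(h/\size{t})$ error; this is exactly why the relation $h=O(\size{t}^\a)$ together with the $H^1$-regularity of the mild solution (Assumption~3) are needed. Two secondary points require care: handling the $g$-term without assuming $g(t,\cdot,v(t))\in H^1$, which is done as above via the Riemann-sum/Dominated-Convergence argument; and keeping the Gronwall amplification bounded by $e^{CT}$, for which one uses $\|P_h\|\le1$ and the uniform boundedness of $\{S(t)\}_{0\le t\le T}$ on $L^2(\R^d)$.
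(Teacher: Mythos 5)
Your overall skeleton (splitting off $(I-P_h)v(t_n)$, the recursion for $\theta^n=P_hv(t_n)-v_h^n$, discrete Gronwall, the consistency sum from Theorem \ref{thm:timelip}, and the dominated-convergence treatment of the $g$-term) parallels the paper's proof, but there is a genuine gap at the decisive step. You bound the accumulated projection error by $\sum_{n=0}^{N-1}\norm{(I-P_h)v(t_{n+1})}{}\lesssim N h = T\,h/\size{t}$ and then assert that $h=O(\size{t}^\a)$ ``forces $h/\size{t}\to 0$.'' That implication is false unless $\a>1$: for $0<\a<1$ the hypothesis allows, e.g., $h=\size{t}^{1/2}$, in which case $h/\size{t}=\size{t}^{-1/2}\to\infty$, and for $\a=1$ one only gets boundedness, not convergence. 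Since the theorem claims convergence for \emph{some} $\a>0$ --- precisely so that the spatial mesh may be much coarser than the time step --- your first-power Gronwall argument, whose spatial error genuinely accumulates like $Nh\sim h/\size{t}$, proves the statement only in the restricted regime $h=o(\size{t})$ (essentially $\a>1$), not the theorem as stated. Your closing paragraph shows the source of the confusion: the coupling $h=O(\size{t}^\a)$ with small $\a$ is a \emph{weaker} restriction on $h$, not a stronger one.

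The paper's proof differs exactly where your argument breaks down: it runs the discrete Gronwall inequality on $p$-th powers of the errors. Choosing $p$ at least $\max\{\a^{-1},2\}$ and using H\"older's inequality, it obtains a one-step inequality of the form $\norm{P_hv(t_n)-v_h^n}{}^p\lesssim(1+L\size{t})\norm{P_hv(t_{n+1})-v_h^{n+1}}{}^p+\norm{R^{n+1}_{\size{t}}}{}^p+\norm{S(\size{t})T(\size{t})v(t_{n+1})-S(\size{t})P_hT(\size{t})P_hv(t_{n+1})}{}^p$, and then exploits $\sum_n\norm{R^{n+1}_{\size{t}}}{}^p\le\bigl(\sum_n\norm{R^{n+1}_{\size{t}}}{}\bigr)^p\to0$ together with the fact that the summed spatial contribution becomes $\sum_n h^p\sim T h^p/\size{t}\lesssim T h^{\,p-1/\a}$, which vanishes once $p$ exceeds $1/\a$ (the cross term $\size{t}\,\norm{(I-P_h)v}{}$ only contributes $h^p\size{t}^{p-1}$). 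Raising the per-step error to a high power before summing is the device that turns your divergent $Nh$ accumulation into a summable one under the weak coupling; this is the ingredient your proposal is missing. To repair your route without it, you would have to either strengthen the hypothesis to $h=o(\size{t})$ or invoke a smoothing estimate for $S(t)$ on $H^1$ (as the paper does via Assumption 5 in the monotone case, where the admissible range becomes $\a>\b$) so as to damp the per-step projection error. Your handling of the third sum (avoiding any $H^1$ assumption on $g(t,\cdot,v(t))$ by a Riemann-sum and dominated-convergence argument) is fine and is a mild, harmless deviation from the paper, which instead controls that term through the Lipschitz dependence of $g$ and $\norm{(I-P_h)v(t_{n+1})}{}$.
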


\begin{proof}%[Proof of Theorem \ref{thm:fullconvlip}]

From the fact that for each $n=0,\cdots, N$ and $\size{t}$, $h>0$:
\begin{align*}
\norm{v(t_n)-v_h^n}{}&\le \norm{v(t_n)-P_hv(t_n)}{}+\norm{P_hv(t_n)-v_h^n}{}\lesssim  h \sup_{t\in [0,T]}\norm{v(t)}{H^1(\R^d)}+\max_{0\le n\le N}\norm{P_hv(t_n)-v_h^n}{},
\end{align*}
 we can easily see that it suffices to establish that as  $\size{t}\to 0$ and $h=O(\size{t}^\a)$ for some  $\a>0$
\begin{equation}\label{eq:lipfulltarget}
\max_{0\le n\le N}\norm{P_hv(t_n)-v_h^n}{}\to 0.
\end{equation}

Using the definition \eqref{eq:localerr} of the local truncation error  $R^{n+1}_{\size{t}}$ at $t_{n+1}$ and \eqref{eq:fulldiscretelip}, we can obtain a point-wise estimate for any given $x$ in $\R^d$   that
\begin{eqnarray}
P_hv(t_n,x)-v_h^n(x)&=&P_h\big(S(\size{t})T(\size{t})v(t_{n+1},x)+R^{n+1}_{\size{t}}(x)\big)-v_h^n(x)\nb\\
&=&P_hS(\size{t})T(\size{t})v(t_{n+1},x)-S_h(\size{t})T_h(\size{t})v_h^{n+1}(x)+P_hR^{n+1}_{\size{t}}(x)\nb\\
&=&\big(P_hS(\size{t})P_hT(\size{t})P_hv(t_{n+1},x)-S_h(\size{t})T_h(\size{t})v_h^{n+1}(x)\big)+P_hR^{n+1}_{\size{t}}(x)\nb\\
&&+P_hS(\size{t})T(\size{t})v(t_{n+1},x)-P_hS(\size{t})P_hT(\size{t})P_hv(t_{n+1},x).
\label{eq:projlip}
\end{eqnarray}
%
%\begin{align}\label{eq:projlip}
%\begin{split}
%P_hv(t_n,x)-v_h^n(x)&=P_h\big(S(\size{t})T(\size{t})v(t_{n+1},x)+R^{n+1}_{\size{t}}(x)\big)-v_h^n(x)\\
%&=P_hS(\size{t})T(\size{t})v(t_{n+1},x)-S_h(\size{t})T_h(\size{t})v_h^{n+1}(x)+P_hR^{n+1}_{\size{t}}(x)\\
%&=\big(P_hS(\size{t})P_hT(\size{t})P_hv(t_{n+1},x)-S_h(\size{t})T_h(\size{t})v_h^{n+1}(x)\big)+P_hR^{n+1}_{\size{t}}(x)\\
%&+P_hS(\size{t})T(\size{t})v(t_{n+1},x)-P_hS(\size{t})P_hT(\size{t})P_hv(t_{n+1},x).
%\end{split}
%\end{align}
%
%Taking $L^2$ norm on both sides of \eqref{eq:projlip},
Then we readily see from the boundedness of $S(h)$, $P_h$ and the Lipschitz continuity of $T(h)$ that
\begin{align*}
\norm{P_hv(t_n)-v_h^n}{}
\lesssim &(1+L\size{t})\norm{P_hv(t_{n+1})-v_h^{n+1}}{}+\norm{R^{n+1}_{\size{t}}}{}\\
&+\norm{S(\size{t})T(\size{t})v(t_{n+1})-S(\size{t})P_hT(\size{t})P_hv(t_{n+1})}{}.
\end{align*}

Now let us choose $p$ to be the smallest integer such that $p\ge \max\{\frac{1}{\a},2\}$ and $q$ to be the H\"older conjugate of $p$. Then it follows from H\"older's inequality for sums that
\begin{align*}
\norm{P_hv(t_n)-v_h^n}{}^p
\lesssim &(1+L\size{t})\norm{P_hv(t_{n+1})-v_h^{n+1}}{}^p+\norm{R^{n+1}_{\size{t}}}{}^p\\
&+\norm{S(\size{t})T(\size{t})v(t_{n+1})-S(\size{t})P_hT(\size{t})P_hv(t_{n+1})}{}^p,
\end{align*}
which along with the discrete Gronwall inequality and $v^N_h=P_h\phi$ gives
\begin{align*}
\max_{0\le n\le N}\norm{P_hv(t_n)-v_h^n)}{}^p
&\lesssim \norm{P_h\phi-v_h^{N}}{}^p+\sum_{n=0}^{N-1}\norm{R^{n+1}_{\size{t}}}{}^p\\
&+\sum_{n=0}^{N-1}\norm{S(\size{t})T(\size{t})v(t_{n+1})-S(\size{t})P_hT(\size{t})P_hv(t_{n+1})}{}^p\\
&= \sum_{n=0}^{N-1}\norm{R^{n+1}_{\size{t}}}{}^p+\sum_{n=0}^{N-1}\norm{S(\size{t})T(\size{t})v(t_{n+1})-S(\size{t})P_hT(\size{t})P_hv(t_{n+1})}{}^p.
\end{align*}

We then proceed to estimate the last two terms in the above expression. We first observe that the consistency  \eqref{eq:timeconsistency} of the time discretization in Theorem \ref{thm:timelip} implies
$$\sum_{n=0}^{N-1}\norm{R^{n+1}_{\size{t}}}{}^p\le (\max_{0\le n\le N-1}\norm{R^{n+1}_{\size{t}}}{})^{p-1}\big(\sum_{n=0}^{N-1}\norm{R^{n+1}_{\size{t}}}{}\big)\le \big(\sum_{n=0}^{N-1}\norm{R^{n+1}_{\size{t}}}{}\big)^p\to 0 \q \textnormal{\it{as} $\size{t}\to 0$}.$$

On the other hand, it follows from the linearity of $P_h$ and $S(\size{t})$  that
\begin{align}\label{eq:lipfulltriangle}
\begin{split}
&S(\size{t})T(\size{t})v(t_{n+1})-S(\size{t})P_hT(\size{t})P_hv(t_{n+1})\\
=&S(\size{t})\big(T(\size{t})v(t_{n+1})-v(t_{n+1})\big)
-S(\size{t})P_h\big(T(\size{t})P_hv(t_{n+1})-P_hv(t_{n+1})\big)\\
&+S(\size{t})\big(v(t_{n+1})-P_hv(t_{n+1})\big).
\end{split}
\end{align}

Note the definition of $T(\size{t})$ at $t=t_{n+1}$ implies that $T(\size{t})v(x)-v(x)=-\size{t}g(t_{n+1},x,v(x))$ for any $v\in L^2(\R^d)$ and $x\in \R^d$, from which we derive for any $x\in \R^d$ that
\begin{align*}
&|S(\size{t})\big(T(\size{t})v(t_{n+1})-v(t_{n+1})\big)-S(\size{t})\big(T(\size{t})P_hv(t_{n+1})-P_hv(t_{n+1})\big)|\\
\lesssim & \size{t}S(\size{t})|g(t_{n+1},x,v(t_{n+1},x))-g(t_{n+1},x,P_hv(t_{n+1},x))|,
\end{align*}
which, along with the boundedness of $S(\size{t})$ and Lipschitz continuity of $g$ implies that
\begin{align*}
&\sum_{n=0}^{N-1}\norm{S(\size{t})\big(T(\size{t})v(t_{n+1})-v(t_{n+1})\big)-S(\size{t})\big(T(\size{t})P_hv(t_{n+1})-P_hv(t_{n+1})\big)}{}^p\\
\lesssim &\sum_{n=0}^{N-1}(\size{t})^p\norm{v(t_{n+1})-P_hv(t_{n+1})}{}^p.
\end{align*}
We then readily derive from \eqref{eq:lipfulltriangle} and H\"older's inequality that as
$\size{t}\to 0$ and $h=O(\size{t}^\a)$
\begin{align*}
&\sum_{n=0}^{N-1}\norm{S(\size{t})T(\size{t})v(t_{n+1})-S(\size{t})P_hT(\size{t})P_hv(t_{n+1})}{}^p\\
\lesssim &\sum_{n=0}^{N-1}\norm{S(\size{t})\big(T(\size{t})v(t_{n+1})-v(t_{n+1})\big)-S(\size{t})\big(T(\size{t})P_hv(t_{n+1})-P_hv(t_{n+1})\big)}{}^p\\
+&\sum_{n=0}^{N-1}\norm{S(\size{t})(I-P_h)v(t_{n+1})}{}^p\\
\lesssim& (h^p(\size{t})^{p-1}+h^{p-\frac{1}{\a}})T \big(\sup_{t\in [0,T]}\norm{v(t,\cdot)}{H^1(\R^d)}\big)^p\to 0\,.
\end{align*}

The above analyses enable us to conclude that as $\size{t}\to 0$ and $h=O(\size{t}^\a)$
$$\max_{0\le n\le N}\norm{P_hv(t_n)-v_h^n)}{}^p\to 0,$$
which implies \eqref{eq:lipfulltarget} and completes our proof.
\end{proof}

\subsection{Convergence for a maximal monotone driver $g$}\label{sec:alog2mono}
In this section, we demonstrate the numerical solution of Algorithm 2 (Section \ref{section:method}) provides
a good approximation to the mild solution $v(t)$ under the assumption that the driver is independent of the component $Z$ and satisfies a maximal monotonicity.
We shall first rigorously analyze the convergence of the semi-discrete scheme and then the fully discrete scheme.

\subsubsection{Convergence of the semi-discrete scheme}
We shall consider the driver $g$ to be  locally Lipschitz continuous and maximal monotone.

\ms
\textbf{Assumption 4}.
    \begin{enumerate}[{(a)}]
    \item For any given constant $c\ge 0$, there exists a constant $L(c)>0$ such that it holds for each two pairs $(t_1,u),(t_2,v)\in[0,T)\times L^2(\R^d)$ with $\norm{u}{}\le c,\norm{v}{}\le c$ that
        $$\norm{g(t_1,\cdot,u(\cdot))-g(t_2,\cdot,v(\cdot))}{}\le L(c)\big(|t_1-t_2|+\norm{u-v}{}\big).$$
    \item  There exists $\gamma\in \R$ such that the driver $g$ satisfies the monotonicity:
    \begin{equation}\label{eq:mono}
    (g(t,x,v)-g(t,x,w))(v-w)\ge \gamma|v-w|^2 ~~\quad \fa (t,x,v),(t,x,w)\in[0,T)\times \R^d\times \R^d\,.
    \end{equation}
%where $\inprod{\cdot}{\cdot}$ denotes the Euclinean inner product on $\R^n$.

    \item The driver $g$ satisfies the range condition:
    \begin{equation}\label{eq:range}
    \range(I+\lambda g(t,x,\cdot))=\R ~~\q \fa (t,x)\in [0,T)\times \R^d, \lambda>0.
    \end{equation}
     \end{enumerate}

We remark that the range condition \eqref{eq:range} ensures the solvability of the implicit scheme \eqref{eq:alphatkimp}. For our subsequent analysis, we introduce a semi-discrete version of Algorithm 2 (Section 3):
%For any $N\in \N$, let $0=t_0<t_1<\cdots<t_N=T$ be a uniform partition of $[0,T]$ with time stepsize $\size{t}=\frac{T}{N}$. Then Algorithm 2 with only the temporal discretization is given by
\begin{equation}\label{eq:timediscretemono}
 v^n=T(\size{t})S(\size{t})v^{n+1}, \q n=0,\cdots,N-1;\q  v^N=\phi,
\end{equation}
where operator $T(\size{t})$ is defined implicitly for any given $t\in [0,T)$ and $v\in L^2(\R^d)$:
\begin{equation}\label{eq:Tmono}
[T(\size{t})v](\cdot)+\size{t} g\big(t,\cdot,[T(\size{t})v](\cdot)\big)=v(\cdot).%\quad  \textnormal{for \it{a.e.} $x\in \R$}.
\end{equation}
For each $n=0,\cdots,N-1$, we define the  local truncation error of the scheme \eqref{eq:timediscretemono} at $t_{n+1}$ as:
\begin{equation}\label{eq:localerrmono}
R^{n+1}_{\size{t}}(x)=v(t_n,x)-T(\size{t})S(\size{t})v(t_{n+1},x) \q \mbox{for} ~\textnormal{\it{a.e.} $x\in \R^d$}.
\end{equation}

The following Lipschitz property of operator $T(\size{t})$ is essential for the stability of \eqref{eq:timediscretemono}.

\begin{Proposition}\label{thm:T2}
Under Assumptions 1 and 4,  operator $T(h)$ is a (single-valued) Lipschitz continuous operator from $L^2(\R^d)$ into $L^2(\R^d)$ for small enough $\size{t}$ in the sense that for $|\gamma| \size{t}\le \frac{1}{2}$,
$$\norm{T(\size{t})v-T(\size{t})w}{}\le (1+2|\gamma| \size{t})\norm{v-w}{}\quad \fa v,w\in L^2(\R^d).$$
\end{Proposition}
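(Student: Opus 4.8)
The plan is to establish the two defining properties of $T(\size{t})$ — that it is single-valued (i.e. well-defined as an operator) and that it satisfies the stated Lipschitz bound — both as consequences of the monotonicity assumption \eqref{eq:mono} and the range condition \eqref{eq:range}. First I would work pointwise: for a fixed $(t,x)$, the map $\zeta \mapsto \zeta + \size{t}\, g(t,x,\zeta)$ is a function from $\R$ to $\R$. The range condition \eqref{eq:range} guarantees it is surjective, and the monotonicity \eqref{eq:mono} gives that for $\zeta_1,\zeta_2$,
\begin{equation*}
\big(\zeta_1 + \size{t}\,g(t,x,\zeta_1) - \zeta_2 - \size{t}\,g(t,x,\zeta_2)\big)(\zeta_1-\zeta_2) \ge (1 + \gamma\,\size{t})|\zeta_1-\zeta_2|^2,
\end{equation*}
so when $1 + \gamma\,\size{t} > 0$ (in particular when $|\gamma|\,\size{t} \le \tfrac12$) the map is strictly monotone, hence injective. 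Thus $(I + \size{t}\,g(t,x,\cdot))^{-1}$ is a well-defined single-valued function on $\R$, which shows that for $v \in L^2(\R^d)$ the prescription $[T(\size{t})v](x) = (I+\size{t}\,g(t,x,\cdot))^{-1}(v(x))$ defines a measurable function; I would note measurability follows from measurability of $g$ in $x$ together with continuity in $\zeta$ (a Carathéodory-type argument).

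Next I would derive the Lipschitz estimate pointwise and then integrate. Writing $\xi_i = [T(\size{t})v_i](x)$ for $i=1,2$ with $v_1,v_2\in L^2(\R^d)$, we have $\xi_i + \size{t}\,g(t,x,\xi_i) = v_i(x)$, so subtracting and pairing with $\xi_1-\xi_2$,
\begin{equation*}
|\xi_1-\xi_2|^2 + \size{t}\big(g(t,x,\xi_1)-g(t,x,\xi_2)\big)(\xi_1-\xi_2) = (v_1(x)-v_2(x))(\xi_1-\xi_2).
\end{equation*}
Applying \eqref{eq:mono} to the left side and Cauchy–Schwarz to the right gives $(1+\gamma\,\size{t})|\xi_1-\xi_2|^2 \le |v_1(x)-v_2(x)|\,|\xi_1-\xi_2|$, hence $|\xi_1-\xi_2| \le (1+\gamma\,\size{t})^{-1}|v_1(x)-v_2(x)|$ whenever $1+\gamma\,\size{t}>0$. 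Squaring, integrating over $\R^d$, and taking square roots yields $\norm{T(\size{t})v_1 - T(\size{t})v_2}{} \le (1+\gamma\,\size{t})^{-1}\norm{v_1-v_2}{}$; in particular $T(\size{t})v_i \in L^2(\R^d)$ once we check $T(\size{t})0 \in L^2$, which follows from Assumption 1(b) and the same pointwise bound applied with $v_2=0$. Finally, to match the exact form stated in the proposition, I would use the elementary inequality $(1+\gamma\,\size{t})^{-1} \le 1 + 2|\gamma|\,\size{t}$, valid precisely when $|\gamma|\,\size{t} \le \tfrac12$: indeed if $\gamma \ge 0$ then $(1+\gamma\,\size{t})^{-1}\le 1 \le 1+2|\gamma|\,\size{t}$, and if $\gamma<0$ then $(1-|\gamma|\,\size{t})(1+2|\gamma|\,\size{t}) = 1 + |\gamma|\,\size{t} - 2|\gamma|^2\,\size{t}^2 \ge 1$ using $|\gamma|\,\size{t}\le\tfrac12$.

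The only genuinely delicate point is the measurability/well-definedness of the pointwise inverse as a function of $x$ — i.e. verifying that $x \mapsto (I+\size{t}\,g(t,x,\cdot))^{-1}(v(x))$ is Lebesgue measurable so that $T(\size{t})v$ is a bona fide element of $L^2(\R^d)$. This is where the hypotheses on $g$ (measurable in $x$, continuous — indeed locally Lipschitz by Assumption 4(a) — in $\zeta$) must be invoked through a standard implicit-function/measurable-selection argument. Everything else is a short computation. I would also remark that the same monotonicity argument shows $T(\size{t})$ extends the classical resolvent of a maximal monotone operator, which is why Assumption 4(c)'s range condition is exactly what is needed; this connects the estimate to the Crandall–Liggett / nonlinear semigroup framework cited elsewhere in the paper.
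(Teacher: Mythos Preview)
Your proof is correct and follows essentially the same route as the paper: both arguments multiply the difference of the defining equations by $\xi_1-\xi_2$ pointwise, invoke the monotonicity \eqref{eq:mono} to obtain $(1+\gamma\,\size{t})|\xi_1-\xi_2|\le |v_1(x)-v_2(x)|$, convert $(1+\gamma\,\size{t})^{-1}\le 1+2|\gamma|\,\size{t}$ for $|\gamma|\,\size{t}\le\tfrac12$, and handle the $L^2$-mapping property by comparing with the zero function via Assumption~1(b). Your treatment is in fact slightly more careful than the paper's in two respects --- you flag the measurability of the pointwise inverse (which the paper leaves implicit) and you spell out the elementary inequality at the end --- while the paper instead derives single-valuedness \emph{a posteriori} from the Lipschitz bound (taking $v=w$) rather than proving injectivity first as you do; but these are matters of exposition, not substance.
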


\begin{proof}
For any given $\size{t}\in (0,T]$ and $v,w\in L^2(\R^d)$, the range condition implies $T(\size{t})v,T(\size{t})w\not=\emptyset$. Let $\hat{v}\in T(\size{t})v,\hat{w}\in T(\size{t})w$, we can obtain from  \eqref{eq:mono} that  for almost every $x$ in $\R^d$,
\begin{align*}
|\hat{v}(x)-\hat{w}(x)|^2
=&\big(v(x)-\size{t}g(t,x,\hat{v}(x))-w(x)+\size{t}g(t,x,\hat{w}(x)\big)(\hat{v}(x)-\hat{w}(x))\\
=&(v(x)-w(x))(\hat{v}(x)-\hat{w}(x))-\size{t}(g(t,x,\hat{v}(x))-g(t,x,\hat{w}(x)))(\hat{v}(x)-\hat{w}(x))\\
\le& |v(x)-w(x)||\hat{v}(x)-\hat{w}(x)|-\size{t}\gamma|\hat{v}(x)-\hat{w}(x)|^2,
\end{align*}
%which implies $$(1+\gamma\size{t})|\hat{v}(x)-\hat{w}(x)|\le |v(x)-w(x)|, $$   %|\hat{v}(x)-\hat{w}(x)|.$$
which along with the fact that $\frac{1}{1+\gamma \size{t}}\le \frac{1}{1-|\gamma| \size{t}}\le 1+2|\gamma| \size{t}$ for $|\gamma| \size{t}\le \frac{1}{2}$ implies
\begin{equation}\label{eq:monostab}
\norm{\hat{v}-\hat{w}}{}\le (1+2|\gamma| \size{t})\norm{v-w}{}.
\end{equation}

To establish that for any given $t\in [0,T)$, $T(\size{t})$ maps $L^2(\R^d)$ into $L^2(\R^d)$, we first introduce the functions $w_0(x)=g(t,x,0)$ and $h(x)=g(0,x,0)$ for $x\in \R^d$. It follows from the Lipschitz continuity of $g$ and the integrability of $h$ that
$$\norm{w_0}{}\le \norm{h}{}+\norm{w_0-h}{}\le \norm{h}{}+L(0)t,$$
which implies $w_0\in L^2(\R^d)$. Therefore, for any $\hat{v}\in T(\size{t})v$ with $v\in L^2(\R^d)$, by noticing that the zero function $0\in T(\size{t})w_0$, we see from \eqref{eq:monostab}  that for $|\gamma| \size{t}\le \frac{1}{2}$,
$$\norm{\hat{v}-0}{}\le (1+2|\gamma| \size{t})\norm{v-w_0}{}.$$
%which shows for small enough $\size{t}$, $T(\size{t})$ maps $L^2(\R)$ into $L^2(\R)$.
Now we can easily see from taking $v=w$ in \eqref{eq:monostab} that
operator $T(\size{t})$ is single-valued.
%Moreover, for any $\hat{v}_1,\hat{v}_2\in T(\size{t})v$, letting $v=w$ in \eqref{eq:monostab} gives $\norm{\hat{v}_1-\hat{v}_2}{}=0$, which concludes that $T(\size{t})$ is single-valued and Lipschitz continuous.
\end{proof}

An analogue of Theorem \ref{thm:timelip} concludes the consistency and convergence of the scheme  \eqref{eq:timediscretemono}.

\begin{Theorem} Under Assumptions 1 and 4, the scheme \eqref{eq:timediscretemono} is consistent in the sense that
\begin{equation}\label{eq:timeconsistencymono}
\sum_{n=0}^{N-1}\norm{R^{n+1}_{\size{t}}}{}\to 0 \q  \textnormal{\it{as} $N\to \infty$}.
\end{equation}
%where $R^{n+1}_{\size{t}}$ is the local truncation error at $t_{n+1}$ defined as in \eqref{eq:localerrmono}.
Moreover, the solution $v^n$ of the scheme converges to the mild solution $v$ in the sense that,
$$\max_{0\le n\le N}\norm{v(t_n)-v^n}{}\to 0\q  \textnormal{\it{as} $N\to \infty$}.$$
\end{Theorem}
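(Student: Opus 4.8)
The plan is to mirror the proof of Theorem~\ref{thm:timelip}: first use the discrete Gronwall inequality to reduce convergence to consistency, then verify consistency. For the reduction, set $e^n\coloneqq v(t_n)-v^n$. Rearranging the definition \eqref{eq:localerrmono} of $R^{n+1}_{\size{t}}$ and subtracting the scheme \eqref{eq:timediscretemono} gives $e^n=T(\size{t})S(\size{t})v(t_{n+1})-T(\size{t})S(\size{t})v^{n+1}+R^{n+1}_{\size{t}}$; taking $L^2$-norms and invoking the boundedness of $S(\size{t})$ from Assumption~1(a) together with the Lipschitz bound $\norm{T(\size{t})u-T(\size{t})w}{}\le(1+2|\gamma|\size{t})\norm{u-w}{}$ of Proposition~\ref{thm:T2} (valid once $|\gamma|\size{t}\le\frac12$) yields $\norm{e^n}{}\lesssim(1+\size{t})\norm{e^{n+1}}{}+\norm{R^{n+1}_{\size{t}}}{}$ for $n=0,\dots,N-1$. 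Since $e^N=v(T)-\phi=0$, the discrete Gronwall inequality gives $\max_{0\le n\le N}\norm{e^n}{}\lesssim\sum_{n=0}^{N-1}\norm{R^{n+1}_{\size{t}}}{}$, so it remains to establish the consistency \eqref{eq:timeconsistencymono}. This half is routine given Proposition~\ref{thm:T2} (an easy induction also shows $v^n\in L^2(\R^d)$ for all $n$, so every quantity above is well defined).

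For the truncation error I would first unwind $R^{n+1}_{\size{t}}$. Write $\hat v_{n+1}\coloneqq T(\size{t})S(\size{t})v(t_{n+1})$; the defining relation \eqref{eq:Tmono} gives $\hat v_{n+1}=S(\size{t})v(t_{n+1})-\size{t}\,g(t_n,\cdot,\hat v_{n+1})$, while the variation of constants formula \eqref{eq:inteq} on $[t_n,t_{n+1}]$ (recall $g$ is independent of $Z$ here) gives $v(t_n)=S(\size{t})v(t_{n+1})-\int_{t_n}^{t_{n+1}}S(\tau-t_n)g(\tau,\cdot,v(\tau,\cdot))\,d\tau$. Subtracting and inserting $\pm g(\tau,\cdot,v(\tau,\cdot))$ yields $R^{n+1}_{\size{t}}=A_n+B_n$ with
$$A_n=\int_{t_n}^{t_{n+1}}\big(g(t_n,\cdot,\hat v_{n+1})-g(\tau,\cdot,v(\tau,\cdot))\big)\,d\tau,\qquad B_n=\int_{t_n}^{t_{n+1}}\big(I-S(\tau-t_n)\big)g(\tau,\cdot,v(\tau,\cdot))\,d\tau.$$
The contribution $\sum_n\norm{B_n}{}$ is handled exactly as in Theorem~\ref{thm:timelip}: it equals $\int_0^T f(\tau)\,d\tau$ for a step function $f$ dominated by the $N$-independent constant $L(c_0)(T+c_0)+\norm{h}{}$ (with $c_0\coloneqq\sup_{t\in[0,T]}\norm{v(t)}{}$, using the local Lipschitz bound of Assumption~4(a) at level $c_0$ and $h=g(0,\cdot,0)\in L^2(\R^d)$ from Assumption~1(b)) and tending to $0$ pointwise by the strong continuity of $S(\cdot)$, so $\sum_n\norm{B_n}{}\to0$ by dominated convergence.

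The genuinely new step, and the one I expect to be the main obstacle, is the estimate of $A_n$, because $\hat v_{n+1}$ is the \emph{implicit} resolvent value $T(\size{t})S(\size{t})v(t_{n+1})$ rather than $v(t_{n+1})$, and $g$ is only locally Lipschitz. The plan is: (i) produce a uniform-in-$n$, $N$-independent a priori bound $\sup_n\norm{\hat v_{n+1}}{}\le c_1$, via the device from the proof of Proposition~\ref{thm:T2} that $T(\size{t})(\size{t}\,g(t_n,\cdot,0))=0$ together with \eqref{eq:monostab} and $\norm{g(t_n,\cdot,0)}{}\le\norm{h}{}+L(0)T$; (ii) with $L(c_1)$ now fixed, the Lipschitz continuity of $g$ gives $\norm{A_n}{}\le L(c_1)\int_{t_n}^{t_{n+1}}\big(|t_n-\tau|+\norm{\hat v_{n+1}-v(\tau)}{}\big)\,d\tau$, whose first part sums to $O(\size{t})$; (iii) show $\sup_n\sup_{\tau\in[t_n,t_{n+1}]}\norm{\hat v_{n+1}-v(\tau)}{}\to0$ by splitting it into $\norm{\hat v_{n+1}-S(\size{t})v(t_{n+1})}{}=\size{t}\norm{g(t_n,\cdot,\hat v_{n+1})}{}\lesssim\size{t}$ (using the bound from (i)), $\norm{S(\size{t})v(t_{n+1})-v(t_{n+1})}{}\le\sup_{t\in[0,T]}\norm{S(\size{t})v(t)-v(t)}{}$, and $\norm{v(t_{n+1})-v(\tau)}{}$; the last tends to $0$ uniformly by the uniform continuity of $t\mapsto v(t)$ on $[0,T]$, and the middle term tends to $0$ because the strong convergence $S(\size{t})\to I$ is uniform on the compact set $\{v(t):t\in[0,T]\}\subset L^2(\R^d)$ — this compactness argument is the one place where more than a bare estimate is needed. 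Combining, $\sum_n\norm{A_n}{}\lesssim T\size{t}+T\sup_n\sup_\tau\norm{\hat v_{n+1}-v(\tau)}{}\to0$, which together with the bound on $\sum_n\norm{B_n}{}$ gives \eqref{eq:timeconsistencymono}; with the first paragraph this completes the proof.
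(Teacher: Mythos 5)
Your proof is correct, and its first half (the reduction of convergence to consistency via the error recursion, Proposition \ref{thm:T2} and the discrete Gronwall inequality) is exactly the paper's argument; but your treatment of the consistency \eqref{eq:timeconsistencymono} takes a genuinely different route. The paper never evaluates $g$ at the implicit stage value $\hat v_{n+1}=T(\size{t})S(\size{t})v(t_{n+1})$: it adds and subtracts $\size{t}\,g(t_n,\cdot,v(t_n,\cdot))$ in the variation of constants formula so as to recognize $v(t_n)$ itself as $T(\size{t})$ applied to a perturbed argument, and then the Lipschitz bound of Proposition \ref{thm:T2} reduces the truncation error to $(1+2|\gamma|\size{t})\int_{t_n}^{t_{n+1}}\norm{S(\tau-t_n)g(\tau,\cdot,v(\tau,\cdot))-g(t_n,\cdot,v(t_n,\cdot))}{}\,d\tau$, in which $g$ is only evaluated along the exact solution (so the local Lipschitz constant at level $c_0=\sup_t\norm{v(t)}{}$ suffices), and one concludes with uniform continuity of $v$ and dominated convergence exactly as in Theorem \ref{thm:timelip}. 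You instead expand the resolvent relation directly, obtaining the decomposition $R^{n+1}_{\size{t}}=A_n+B_n$ with $g(t_n,\cdot,\hat v_{n+1})$ appearing in $A_n$, which forces two extra (correct) ingredients the paper does not need: the $N$-independent a priori bound on $\hat v_{n+1}$ obtained from \eqref{eq:monostab} and the zero point $T(\size{t})(\size{t}\,g(t_n,\cdot,0))=0$ (incidentally stated more accurately than the paper's own remark in the proof of Proposition \ref{thm:T2}), and the uniformity of the strong convergence $S(\size{t})\to I$ on the compact orbit $\{v(t):t\in[0,T]\}$. The paper's device is shorter and keeps all nonlinear evaluations on $v$; your version is more explicit about the implicit scheme's intermediate values and is a legitimate, self-contained alternative, at the cost of those two auxiliary steps (and, as a small point of bookkeeping, the Lipschitz constant in your estimate of $A_n$ should be $L(\max(c_0,c_1))$ since one argument is $v(\tau)$ and the other is $\hat v_{n+1}$).
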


\begin{proof}

%For any given $N\in \N$ and $\epsilon>0$, we consider the time partition with mesh size $\size{t}=\frac{T}{N}$.
Rearranging terms in \eqref{eq:localerrmono} leads to
\begin{equation}\label{eq:trunerr2}
v(t_n,x)=T(\size{t})S(\size{t})v(t_{n+1},x)+R^{n+1}_{\size{t}}(x)~~\q  \textnormal{for \it{a.e.} $x\in \R^d$}.
\end{equation}
Then for almost every $x$ in $\R^d$, subtracting \eqref{eq:trunerr2} from the scheme \eqref{eq:timediscretemono}, we obtain that
\begin{align*}
|v(t_n,x)-v^n(x)|&=|T(\size{t})S(\size{t})v(t_{n+1},x)+R_{n+1}(x)-T(\size{t})S(\size{t})v^{n+1}(x)|\\
&\le |T(\size{t})S(\size{t})v(t_{n+1},x)-T(\size{t})S(\size{t})v^{n+1}(x)|+|R_{n+1}(x)|.
\end{align*}
Taking $L^2$-norm on both sides of the above expression and using the boundedness of $S(\size{t})$ and Lemma \ref{thm:T2}, we have the following estimate of the numerical errors: for $|\gamma| \size{t}\le \frac{1}{2}$,
\begin{equation*}%\label{eq:err2}
\norm{v(t_n)-v^n}{}\lesssim %\frac{1}{1+\gamma \size{t}}\norm{v(t_{n+1})-v^{n+1}}{}+\norm{R^{n+1}_{\size{t}}}{}\le
(1+2|\gamma| \size{t})\norm{v(t_{n+1})-v^{n+1}}{}+\norm{R^{n+1}_{\size{t}}}{},
\end{equation*}
from which, the discrete Gronwall inequality and the condition $v^n=v(T)=\phi$ it follows that there exists $t_1>0$ such that for any $\size{t}<t_1$,
%%and the following inequality
%%$$\dfrac{1}{1+\gamma \size{t}}\le \dfrac{1}{1-|\gamma| \size{t}}\le 1+2|\gamma| \size{t},\quad \textrm{with}\; |\gamma| \size{t}\le \frac{1}{2},$$
%implies there exists $t_1>0$, such that for any $\size{t}<t_1$, it holds that
\begin{align*}
\max_{0\le n\le N}\norm{v(t_n)-v^n}{}\lesssim \norm{v(T)-v^N}{}+\sum_{n=0}^{N-1} \norm{R^{n+1}_{\size{t}}}{}=\sum_{n=0}^{N-1} \norm{R^{n+1}_{\size{t}}}{}.
\end{align*}
%where in the last equality, we have used the condition that $v^n=v(T)=\phi$.

Now we can readily see that it suffices to establish the consistency \eqref{eq:timeconsistencymono} in order to conclude
the convergence of \eqref{eq:timediscretemono}.
%Therefore, to obtain the convergence result, it is sufficient to establish the consistency result \eqref{eq:timeconsistency}.
Since the mild solution satisfies \eqref{eq:inteq}, we obtain for any given $x\in \R^d$:
\begin{align*}
v(t_n,x)&=S(\size{t})v(t_{n+1},x)-\int_{t_n}^{t_{n+1}} S(\tau-t_n)g(\tau,x,v(\tau,x))\,d\tau\\
&-\size{t}g(t_n,x,v(t_n,x))+\int_{t_n}^{t_{n+1}}g(t_n,x,v(t_n,x))\,d\tau,
\end{align*}
which along with the definition of $T(\size{t})$ gives
\begin{align*}
v(t_n,x)=T(\size{t})\Big\{S(\size{t})v(t_{n+1},x)-\int_{t_n}^{t_{n+1}} S(\tau-t_n)g(\tau,x,v(\tau,x))\,d\tau+\int_{t_n}^{t_{n+1}}g(t_n,x,v(t_n,x))\,d\tau\Big\}.
\end{align*}
Subtracting \eqref{eq:trunerr2} from the above equation, taking $L^2$-norm on both sides, using the inequality for Bochner integral and Lemma \ref{thm:T2} we obtain for $|\gamma| \size{t}\le \frac{1}{2}$ that
%Subtracting \eqref{eq:trunerr2} from the above equation yields
%\begin{align*}
%|R_{n+1}(x)|&=\bigg|T(\size{t})S(\size{t})v(t_{n+1},x)\\
%&-T(\size{h})\bigg(S(\size{t})v(t_{n+1},x)-\int_{t_n}^{t_{n+1}} S(\tau-t_n)g(\tau,x,v(\tau,x))\,d\tau+\int_{t_n}^{t_{n+1}}g(t_n,x,v(t_n,x)\,d\tau\bigg)\bigg|,
%\end{align*}
%then taking $L^2$-norm on both sides, along with the inequality of Bochner integral, we obtain
\begin{align*}
\norm{R^{n+1}_{\size{t}}}{}&\le
(1+2|\gamma| \size{t})\int_{t_n}^{t_{n+1}} \norm{S(\tau-t_n)g(\tau,\cdot,v(\tau,\cdot))-g(t_n,\cdot,v(t_n,\cdot))}{}\,d\tau\\
&\le (1+2|\gamma| \size{t})\int_{t_n}^{t_{n+1}} \norm{\big(S(\tau-t_n)-I\big)g(\tau,\cdot,v(\tau,\cdot))+\big(g(\tau,\cdot,v(\tau,\cdot))-g(t_n,\cdot,v(t_n,\cdot))\big)}{}\,d\tau\\
&\le (1+2|\gamma| \size{t})\int_{t_n}^{t_{n+1}} \norm{\big(S(\tau-t_n)-I\big)g(\tau,\cdot,v(\tau,\cdot))}{}+\norm{g(\tau,\cdot,v(\tau,\cdot))-g(t_n,\cdot,v(t_n,\cdot))}{}\,d\tau.
\end{align*}

Now we estimate the last two terms in the above expression. Since the mild solution $v\in C([0,T];L^2(\R^d))$, we have $\sup_{t\in[0,T]}\norm{v(t)}{}\le c$ for some constant $c\ge 0$. Hence the locally Lipschitz continuity of $g$ implies that there exists $L(c)>0$ such that
$$\norm{g(s,\cdot,v(s,\cdot))-g(t,\cdot,v(t,\cdot))}{}\le L(c)\big(|s-t|+\norm{v(s)-v(t)}{}\big)~~\quad \fa s, t\in [0,T).$$
Furthermore for any given $\epsilon>0$, there exists $0<t_0<\min(\epsilon, t_1)$ such that for any $\size{t}<t_0$ and $\tau\in [t_n,t_{n+1}]$, we have
$%\begin{align*}
\norm{v(\tau)-v(t_{n})}{}< \epsilon
$. %\end{align*}
These estimates directly imply for any $\size{t}<t_0$,
\begin{align*}
\norm{R^{n+1}_{\size{t}}}{}&\le \int_{t_n}^{t_{n+1}} \norm{\big(S(\tau-t_n)-I\big)g(\tau,\cdot,v(\tau,\cdot))}{}+\norm{g(\tau,\cdot,v(\tau,\cdot))-g(t_n,\cdot,v(t_n,\cdot))}{}\,d\tau\\
&\lesssim \int_{t_n}^{t_{n+1}} \norm{\big(S(\tau-t_n)-I\big)g(\tau,\cdot,v(\tau,\cdot))}{}\, d\tau+2\epsilon \size{t},
\end{align*}
from which it is clear that it suffices to establish
 \begin{equation}\label{eq:fmono}
 \sum_{n=0}^{N-1} \int_{t_n}^{t_{n+1}} \norm{\big(S(\tau-t_n)-I\big)|g(\tau,\cdot,v(\tau,\cdot))|}{}\,d\tau=
 \int_0^T f(\tau)\, d\tau\to 0 \q \textnormal{ as $N\to \infty$},
 \end{equation}
 where $f(\tau)\coloneqq \sum_{n=0}^{N-1} \norm{\big(S(\tau-t_n)-I\big)|g(\tau,\cdot,v(\tau,\cdot))|}{}1_{[t_n,t_{n+1})}(\tau)$ for each $\tau\in [0,T]$.
In fact, using the facts that $\sup_{t\in[0,T]}\norm{v(t)}{}\le c$ for some constant $c>0$, $S(t)$ are bounded and $g$ satisfies locally Lipschitz continuity, we obtain for each $\tau\in [0,T)$ that
 \begin{align*}
 f(\tau)\le L(c)( \tau+\norm{v(\tau)}{})+\norm{g(0,\cdot,0)}{},
 \end{align*}
then \eqref{eq:fmono} follows from an argument using the strong continuity of $S(t)$ and the Dominated Convergence Theorem,
which is similar to that in the proof of \eqref{eq:flip}.
 \end{proof}

\subsubsection{Convergence of the fully discrete scheme}

In this section, we establish the convergence of the fully discrete Algorithm 2 to the mild solution $v$ satisfying the variation of constants formula \eqref{eq:inteq}. We shall perform our analysis under the following extra assumption on the transition semigroup:

\ms
\textbf{Assumption 5}.
The family $\{S(t)\}_{0\le t\le T}$ maps $H^1(\R^d)$ into $H^1(\R^d)$.
And there exist a function $\rho(t):(0,\infty)\to (0,\infty)$ and  a real constant $\b>0$ with $\rho(t)=O(t^{-\b})$ as $t\to 0$ such that
    $$\norm{S(t)f}{H^1(\R^d)}\le \rho(t)\norm{f}{H^1(\R^d)}~~\quad \fa t>0, f\in H^1(\R^d).$$

We remark that Assumption 5 is satisfied by a large class of Markov processes, including It\^{o} diffusions associated with a uniformly elliptic operators $\gene$ in divergence form \cite{sheu1991some}, with $\b={1}/{2}$.

For our subsequent analysis, we shall %Given a uniform time partition of $[0,T]$ (resp. spatial partition  of  $\R$) with mesh size $\size{t}$ (resp. $ h>0$),
reformulate Algorithm 2 in Section \ref{section:method} as:
\begin{equation}\label{eq:fulldiscretemono}
 v^n_h=T_h(\size{t})S_h(\size{t})v_h^{n+1},\quad n=0,\cdots,N-1; \q
 v^N_h=P_h\phi,
\end{equation}
where $T_h(\size{t})$,$S_h(\size{t})$ are defined in \eqref{eq:discreteop} with $T(\size{t})$ replaced by the implicit Euler scheme  \eqref{eq:Tmono}.

With \eqref{eq:fulldiscretemono} in hand, we are now ready to conclude the convergence of Algorithm 2.
%The following convergence theorem asserts the numerical solution to \eqref{eq:fulldiscretemono} provides a good approximation to the mild solution of \eqref{eq:semilinear} when $\size{t}$ and $h$ are small enough. Before stating the theorem, we introduce an additional assumption for the semigroup:

\begin{Theorem}\label{thm:fullconvmono}
Under Assumptions 1, 3, 4 and 5, the solution of the scheme \eqref{eq:fulldiscretemono} converges to the mild solution $v$ of \eqref{eq:semilinear}. More precisely, we have
$$\max_{0\le n\le N}\norm{v(t_n)-v_h^n}{}\to 0\q \textnormal{\it{as} $\size{t}\to 0$ \it{and} $h=O(\size{t}^\a)$ \it{for some} $\a>\b$},$$
where $\b$ is the constant given in Assumption 5.

\end{Theorem}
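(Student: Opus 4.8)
I would transcribe the argument of Theorem~\ref{thm:fullconvlip} to the maximal monotone setting, the two differences being that the explicit Euler operator is replaced by the implicit one \eqref{eq:Tmono} (whose Lipschitz constant $1+2|\gamma|\size t$ for $|\gamma|\size t\le\tfrac12$ is supplied by Proposition~\ref{thm:T2}), and that Assumption~5 is invoked at exactly the step where the two schemes differ. As in Theorem~\ref{thm:fullconvlip}, first split $\norm{v(t_n)-v_h^n}{}\le\norm{(I-P_h)v(t_n)}{}+\norm{P_hv(t_n)-v_h^n}{}$ and bound $\norm{(I-P_h)v(t_n)}{}\lesssim h\sup_{t\in[0,T]}\norm{v(t)}{H^1(\R^d)}$ using Assumption~3, so that it suffices to show $\max_{0\le n\le N}\norm{P_hv(t_n)-v_h^n}{}\to0$. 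Writing $e_n:=P_hv(t_n)-v_h^n$, rearranging \eqref{eq:localerrmono} gives $v(t_n)=T(\size t)S(\size t)v(t_{n+1})+R^{n+1}_{\size t}$, and applying $P_h$, subtracting the reformulated scheme \eqref{eq:fulldiscretemono}, and using $P_h^2=P_h$ produces the three--term decomposition (the analogue of \eqref{eq:projlip})
$$e_n=\big[P_hT(\size t)P_hS(\size t)v(t_{n+1})-P_hT(\size t)P_hS(\size t)v_h^{n+1}\big]+\big[P_hT(\size t)S(\size t)v(t_{n+1})-P_hT(\size t)P_hS(\size t)v(t_{n+1})\big]+P_hR^{n+1}_{\size t}.$$

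For the first (stability) bracket I would use $\norm{P_h}{}\le1$, the Lipschitz bound for $T(\size t)$ from Proposition~\ref{thm:T2}, the boundedness of $S(\size t)$, and the splitting $v(t_{n+1})-v_h^{n+1}=(I-P_h)v(t_{n+1})+e_{n+1}$ (valid since $v_h^{n+1}$ is piecewise constant), obtaining a bound $\lesssim(1+2|\gamma|\size t)\big(\norm{e_{n+1}}{}+\norm{(I-P_h)v(t_{n+1})}{}\big)$. For the second (spatial consistency) bracket the Lipschitz bound for $T(\size t)$ collapses it to $\lesssim(1+2|\gamma|\size t)\norm{(I-P_h)S(\size t)v(t_{n+1})}{}$, and here lies the only genuinely new ingredient: in Theorem~\ref{thm:fullconvlip} the corresponding term was $\norm{S(\size t)(I-P_h)v(t_{n+1})}{}$, handled by $\norm{S(\size t)}{}\le1$ and $\norm{(I-P_h)v(t_{n+1})}{}\lesssim h\norm{v(t_{n+1})}{H^1(\R^d)}$; because Algorithm~2 applies the semigroup \emph{before} the projection in the exact propagator $P_hT(\size t)S(\size t)$, I must instead estimate $(I-P_h)$ applied to the evolved state, and for this I use Assumption~5: $v(t_{n+1})\in H^1(\R^d)$ (Assumption~3) implies $S(\size t)v(t_{n+1})\in H^1(\R^d)$ with $\norm{S(\size t)v(t_{n+1})}{H^1(\R^d)}\le\rho(\size t)\norm{v(t_{n+1})}{H^1(\R^d)}$, hence $\norm{(I-P_h)S(\size t)v(t_{n+1})}{}\lesssim h\rho(\size t)\sup_{t}\norm{v(t)}{H^1(\R^d)}$. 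Since $\b>0$ makes $\rho(\size t)$ unbounded as $\size t\to0$, the $\norm{(I-P_h)v(t_{n+1})}{}$ term from the stability bracket is absorbed, and collecting terms yields
$$\norm{e_n}{}\lesssim(1+2|\gamma|\size t)\norm{e_{n+1}}{}+h\rho(\size t)\sup_{t\in[0,T]}\norm{v(t)}{H^1(\R^d)}+\norm{R^{n+1}_{\size t}}{},\qquad n=0,\dots,N-1.$$

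The remainder follows the Lipschitz case verbatim: choose an integer $p$ with $p\ge2$ and $p(\a-\b)>1$ (possible because $\a>\b$), raise the recursion to the $p$-th power, apply H\"older's inequality for sums and the discrete Gronwall inequality, and use $e_N=P_h\phi-v_h^N=0$, to get $\max_n\norm{e_n}{}^p\lesssim\sum_{n=0}^{N-1}\norm{R^{n+1}_{\size t}}{}^p+N\big(h\rho(\size t)\sup_t\norm{v(t)}{H^1(\R^d)}\big)^p$. The first sum is $\le\big(\sum_{n=0}^{N-1}\norm{R^{n+1}_{\size t}}{}\big)^p$, which tends to $0$ by the semi-discrete consistency \eqref{eq:timeconsistencymono} already proved (under Assumptions~1 and~4); the second is $O\big(\size t^{-1}(\size t^{\a}\size t^{-\b})^p\big)=O\big(\size t^{\,p(\a-\b)-1}\big)\to0$ as $\size t\to0$ with $h=O(\size t^\a)$, precisely because $p(\a-\b)>1$. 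This is where the hypothesis $\a>\b$ enters, the missing margin over the $\a>0$ of the Lipschitz case being exactly the $O(\size t^{-\b})$ growth of $\rho$.

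I expect the spatial consistency bracket to be the main obstacle: the point to get right is that in Algorithm~2 the exact one-step map is $P_hT(\size t)S(\size t)$ rather than $S(\size t)$-outermost as in Algorithm~1, so the mesh error is $(I-P_h)$ acting on $S(\size t)v(t_{n+1})$, and it is only in this configuration that the $H^1\!\to\!H^1$ bound of Assumption~5 is both needed and available; feeding the resulting factor $\rho(\size t)=O(\size t^{-\b})$ through the power-$p$ Gronwall estimate is what determines the admissible range $\a>\b$. A routine preliminary, furnished by Assumption~4 together with the range condition \eqref{eq:range}, is that $T(\size t)$ is single-valued and $(1+2|\gamma|\size t)$-Lipschitz uniformly for all sufficiently small $\size t$, which legitimizes the estimates above for all fine enough partitions.
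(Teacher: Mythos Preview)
Your proposal is correct and follows essentially the same route as the paper's proof. The paper also reduces to controlling $e_n=P_hv(t_n)-v_h^n$, uses the Lipschitz bound of Proposition~\ref{thm:T2} for $T(\size t)$, isolates the spatial consistency term $(I-P_h)S(\size t)v(t_{n+1})$ whose control requires Assumption~5, raises to a power $p>\max\{(\a-\b)^{-1},2\}$, and applies the discrete Gronwall inequality. The only cosmetic difference is in how the decomposition is organized: the paper first applies the Lipschitz bound of $T(\size t)$ to get $\norm{P_hS(\size t)v_h^{n+1}-S(\size t)v(t_{n+1})}{}$ and then splits this via the intermediate point $P_hS(\size t)P_hv(t_{n+1})$, arriving at the pair $\norm{P_hS(\size t)(I-P_h)v(t_{n+1})}{}$ and $\norm{(I-P_h)S(\size t)v(t_{n+1})}{}$; you instead split at the level of the three-bracket decomposition and pick up the analogous ``easy'' term $\norm{(I-P_h)v(t_{n+1})}{}$ from the stability bracket (via $v(t_{n+1})-v_h^{n+1}=(I-P_h)v(t_{n+1})+e_{n+1}$). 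Either way the two spatial residuals are bounded by $h\sup_t\norm{v(t)}{H^1}$ and $h\rho(\size t)\sup_t\norm{v(t)}{H^1}$ respectively, and your observation that the first is absorbed by the second for small $\size t$ simply streamlines the final estimate; the paper keeps them separate and obtains the two exponents $p-\a^{-1}$ and $(1-\b\a^{-1})p-\a^{-1}$, of which the latter is the binding one and coincides with your $p(\a-\b)-1$.
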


\begin{proof}
We can  infer using the same argument as that of Theorem \ref{thm:fullconvlip} that it suffices to establish
\begin{equation}\label{eq:monofulltarget}
\max_{0\le n\le N}\norm{P_hv(t_n)-v_h^n}{}\to 0\q \textnormal{\it{as} $\size{t}\to 0$ \it{and} $h=O(\size{t}^\a)$ \it{for some} $\a>\b$}.
\end{equation}

We see from the definition \eqref{eq:localerrmono}  %$R^{n+1}_{\size{t}}$ at $t_{n+1}$
and \eqref{eq:fulldiscretemono} that for almost every $x\in \R^d$,
\begin{align*}
P_hv(t_n,x)-v_h^n(x)&=P_h\big(T(\size{t})S(\size{t})v(t_{n+1},x)+R^{n+1}_{\size{t}}(x)\big)-v_h^n(x)\\
&=P_hT(\size{t})S(\size{t})v(t_{n+1},x)+P_hR^{n+1}_{\size{t}}(x)-P_hT(\size{t})P_hS(\size{t})v_h^{n+1}(x)\\
&=P_h\big(T(\size{t})S(\size{t})v(t_{n+1},x)-T(\size{t})P_hS(\size{t})v_h^{n+1}(x) \big)+P_hR^{n+1}_{\size{t}}(x).
\end{align*}
Then taking the $L^2$-norm, using the boundedness of $S(\size{t})$ and the Lipschitz continuity of $T(\size{t})$,
we deduce that there exists $t_1>0$ such that for any $\size{t}<t_1$ and $n=0,\cdots,N-1$:
\begin{align*}
\norm{P_hv(t_n)-v_h^n}{}
\lesssim &(1+|\gamma|\size{t})\norm{P_hS(\size{t})v_h^{n+1}-S(\size{t})v(t_{n+1})}{}+\norm{R^{n+1}_{\size{t}}}{}\\
\le &(1+|\gamma|\size{t})\norm{P_hS(\size{t})v_h^{n+1}-P_hS(\size{t})P_hv(t_{n+1})}{}\\
&+\norm{P_hS(\size{t})P_hv(t_{n+1})-S(\size{t})v(t_{n+1})}{}+\norm{R^{n+1}_{\size{t}}}{}\\
\lesssim &(1+|\gamma|\size{t})\norm{P_hv(t_{n+1})-v_h^{n+1}}{}+\norm{P_hS(\size{t})P_hv(t_{n+1})-S(\size{t})v(t_{n+1})}{}+\norm{R^{n+1}_{\size{t}}}{}.
\end{align*}

Now let us choose $p$ to be the smallest integer such that $p> \max\{(\a-\b)^{-1},2\}$ and $q$ to be the H\"older conjugate of $p$. Note this choice of $p$ ensures $p-{\a}^{-1}>0$ and $(1-{\b}{\a}^{-1})p-{\a}^{-1}>0$. Then H\"older's inequality for sums asserts that
\begin{align*}
\norm{P_hv(t_n)-v_h^n}{}^p\lesssim (1+\size{t})\norm{P_hv(t_{n+1})-v_h^{n+1}}{}^p+\norm{P_hS(\size{t})P_hv(t_{n+1})-S(\size{t})v(t_{n+1})}{}^p+\norm{R^{n+1}_{\size{t}}}{}^p.
\end{align*}
By further applying the discrete Grownwall inequality to the above estimate, we obtain that
\begin{align*}
\max_{0\le n\le N}\norm{P_hv(t_n)-v_h^n)}{}^p
&\lesssim  \norm{P_h\phi-v_h^{N}}{}^p+\sum_{n=0}^{N-1}\norm{P_hS(\size{t})P_hv(t_{n+1})-S(\size{t})v(t_{n+1})}{}^p+\sum_{n=0}^{N-1}\norm{R^{n+1}_{\size{t}}}{}^p\\
\le &\sum_{n=0}^{N-1}\big(\norm{P_hS(\size{t})(I-P_h)v(t_{n+1})}{}^p+\norm{(I-P_h)S(\size{t})v(t_{n+1})}{}^p\big)
+\sum_{n=0}^{N-1}\norm{R^{n+1}_{\size{t}}}{}^p.
\end{align*}

We then bound the last three terms in the above expression. First one has that as $h=O(\size{t}^{\a})$
\begin{align*}
&\norm{P_hS(\size{t})(I-P_h)v(t_{n+1})}{}^p\lesssim h^{p-\frac{1}{\a}}\size{t}\norm{v(t_{n+1})}{H^1(\R^d)}^p,\\
&\norm{(I-P_h)S(\size{t})v(t_{n+1})}{}^p\lesssim h^{p-\frac{1}{\a}}\size{t}\norm{S(\size{t})v(t_{n+1})}{H^1(\R^d)}^p.
\end{align*}
Moreover, we obtain from the consistency  \eqref{eq:timeconsistencymono} of time discretization that
\begin{align*}
\sum_{n=0}^{N-1}\norm{R^{n+1}_{\size{t}}}{}^p\le (\max_{0\le n\le N-1}\norm{R^{n+1}_{\size{t}}}{})^{p-1}\big(\sum_{n=0}^{N-1}\norm{R^{n+1}_{\size{t}}}{}\big)\le \big(\sum_{n=0}^{N-1}\norm{R^{n+1}_{\size{t}}}{}\big)^p\to 0 \q \textnormal{\it{as} $\size{t}\to 0$}.
\end{align*}
These analyses along with Assumption 5 lead to the estimate:
\begin{align*}
&\max_{0\le n\le N}\norm{P_hv(t_n)-v_h^n)}{}^p\\
\lesssim &
\sum_{n=0}^{N-1} h^{p-{\a}^{-1}}\size{t}\norm{v(t_{n+1})}{H^1(\R^d)}^p+\sum_{n=0}^{N-1}h^{p-{\a}^{-1}}\size{t}\big(\rho(\size{t})\big)^p\norm{v(t_{n+1})}{H^1(\R^d)}^p+ \big(\sum_{n=0}^{N-1}\norm{R^{n+1}_{\size{t}}}{}\big)^p\\
\lesssim& h^{p-{\a}^{-1}} T\big(\sup_{t\in [0,T]}\norm{v(t)}{H^1(\R^d)}\big)^p+h^{(1-{\b}{\a}^{-1})p-{\a}^{-1}}T\big(\sup_{t\in [0,T]}\norm{v(t)}{H^1(\R^d)}\big)^p+\big(\sum_{n=0}^{N-1}\norm{R^{n+1}_{\size{t}}}{}\big)^p \to 0
\end{align*}
as $\size{t}\to 0$ and $h=O(\size{t}^\a)$, which enables us to conclude the desired result \eqref{eq:monofulltarget}.
\end{proof}

%\color{blue}
%We remark that in practice, one needs to take a bounded computational domain for the implementation of the algorithms, which leads to an additional truncation error. 
%%This  error is expected to be  proportional to the logarithm of the domain size as shown in \cite{delarue2006forward,gobet2005regression}. 
%Similar truncation  error is  analyzed in \cite{delarue2006forward,gobet2005regression} and is shown to be  proportional to the logarithm of the domain size. 
%Since it is difficult to adopt their probabilistic arguments to analyze our scheme, we shall demonstrate the impact of the computational domain in  Example 2.2 of Section \ref{sec:numerical}.

\color{black}
%\subsection{Linear Depend on $Z$}
%In this section, we point out using the Girsanov transform, our result can be used to a more general case, where $g$ has a linear dependence on $Z$.
%
%Suppose we want to solve the following BSDE,
%\begin{equation*} \begin{cases} dX_t=b(X_t)dt+\sigma(X_t)dB_t,\quad X_0=x_0,\\ dY_t=\big(g(t,X_t,Y_t)+a(X_t)Z_t\big)dt+Z_t^TdB_t, \quad Y_T=\phi(X_T), \end{cases}
%\end{equation*}
%where $a:\R\to\R$ is a bounded function. Then we can reduce it to a BSDE of the form \eqref{eq:BSDE}.
%
%On $(\Omega,\alg{t},P)$, define the $P$-martingale $$M_t=\exp(-\frac{1}{2}\int_0^t a^2(X_s)\,ds+\int_0^t a(X_s)\,dB_s),$$
%and define $$dQ(\omega)=M_T(\omega)dP(\omega),\quad \textrm{on}\; \alg{T}.$$
%Then by Girsanov theorem, $Q$ is a probability measure on $ \alg{T}$ and $$\hat{B}(t)=-\int_0^t a(X_s)ds+B(t),$$ is a Brownian motion with respect to $Q$. Also the original BSDE can be represented in terms of $\hat{B}_t$ by
%\begin{equation*} \begin{cases} dX_t=\big(a(X_t)+b(X_t)\big)dt+\sigma(X_t)d\hat{B}_t,\quad X_0=x_0,\\ dY_t=g(t,X_t,Y_t)dt+Z_t^Td\hat{B}_t, \quad Y_T=\phi(X_T), \end{cases}
%\end{equation*}
%The solution to the above equation with respect to $Q$ is the solution to original equation with respect to $P$.

\section{Numerical experiments}\l{sec:numerical}
In this section we shall present several numerical examples to demonstrate the effectiveness of of Algorithm 2 in Section \ref{section:method} for solving FBSDEs whose drivers are uniformly Lipschitz continuous on $Z$ but satisfy various regularity assumptions on $Y$.
%For the purpose of implementations, we truncate the real line $\R$ by choosing a bounded spatial domain $\dom\subset \R$ such that
%\begin{equation}\label{eq:domcond}
%P(X_t\in \R\setminus \dom)<\epsilon,\quad  \fa t\in [0,T],
%\end{equation}
%for a given small enough $\epsilon$ and then  partition $[0,T]$ (resp. $\dom$) uniformly with time stepsize $\size{t}$ (resp. mesh size $h$) with $h=\size{t}$. The details of choosing $\dom$ for a particular forward process $X_t$ will be further elaborated in the discussion of each  example.

For each example below, we shall take a computational domain such that the exit probability of
the forward process is as small as $10^{-3}$. 
%\color{blue}
The transition probability \eqref{eq:Pij} and the spatial derivatives in \eqref{eq:beta} are approximated by quadrature rules and central differences, respectively. 
%\color{black}
When analytic solutions are known, we examine the convergence of the numerical solution $v_h$ to the exact solution $v$
using relative $L^2$-norm errors:
\begin{equation}\label{eq:relative}
E_h(v)=\frac{\max_{0\le k\le N}\|v_h^k-v(t_k)\|}{\max_{0\le k\le N}\|v(t_k)\|}.
\end{equation}
%
%if the analytic solution of the associated Cauchy problem is known, we shall examine the convergence of our numerical solution $v_h$ to the analytic solution $v$ on $\dom$ by investigating the behavior of the relative error $E_h(v)$ of $v_h$ defined by
%\begin{equation}\label{eq:relative}
%E_h(v)=\dfrac{\max_{0\le k\le N}\norm{v_h^k-v(t_k)}{L^2(\dom)}}{\max_{0\le k\le N}\norm{v(t_k)}{L^2(\dom)}},
%\end{equation}
%with respect to different mesh size $h$.
%Here $N$ denotes the number of discrete time points of $[0,T]$.
We will also consider the accuracy of our numerical solution $(\hat{Y}_0,\hat{Z}_0)$ at the initial time $t=0$, which is a commonly used criterion to check the performance of a numerical scheme for FBSDEs.
We remark that the drivers  in Examples 1 and 2 have a Lipschitz dependence on the component $Y$, hence a fixed point iterative method could be applied to solve the nonlinear equation \eqref{eq:alphatkimp} in Algorithm 2, whereas in Example 3 the driver $g$ is only locally Lipscthiz continuous with respect to $Y$, therefore Newton's method shall be utilized to solve the corresponding nonlinear equations.

\textbf{Example 1}.
Consider the following FBSDE:
$$\begin{aligncases}
dX_t&=dB_t,&& X_0=0,\\
dY_t&=\frac{\bigg(-\big(\frac{\pi}{2}\big)^2(Y_t-1)+2Z_t\bigg)}{(Y_t-1)^2+\big(\frac{Z_t}{\pi/2}\big)^2+1} dt+Z_t dB_t,&& Y_T=\sin(\frac{\pi}{2}(B_T+T))+1.
\end{aligncases}$$

The corresponding semilinear parabolic PDE \eqref{eq:semilinear} is given by $v(T,x)=\sin(\frac{\pi}{2}(x+t))+1$ and
\bb\label{eq:ex1}
v_t+\frac{1}{2}v_{xx}=\frac{(-(\pi/2)^2(v-1)+2v_x)}{(v-1)^2+(2v_x /\pi)^2+1},\q (t,x)\in (0,T)\times \R.
\ee
%$$\begin{aligncases}
%v_t+\frac{1}{2}v_{xx}&=\dfrac{\big(-\big(\frac{\pi}{2}\big)^2(v-1)+2v_x\big)}{(v-1)^2+\big(\frac{v_x}{\pi/2}\big)^2+1},&& (t,x)\in (0,T)\times \R,\\ v(T,x)&=\sin(\frac{\pi}{2}(x+t))+1, && x\in \R.
%\end{aligncases}$$
The analytic solution to this system is given by
$v(t,x)=\sin(\frac{\pi}{2}(x+t))+1$ for $(t,x)\in [0,T]\times \R$.
We will take the terminal time $T=2$ and the computational domain
$\dom= (-4,4)$.
%for which we have $P(B_t\in \R\setminus \dom )<0.5\%$ for $t\in [0,2]$.
%For the subsequent examples, we shall mention only the computational domains, and the corresponding
%probabilities are of similar magnitude.
%
%We further pick $\epsilon=0.005$ and solve the problem on $\dom= (-4,4)$. Recall that for $p\in [0,1]$, the $p$-th percentile of $B_t$ is $N^{-1}(p)\sqrt{t}$, where $N^{-1}$ is the inverse function of the standard normal distribution function
%\begin{equation}\label{eq:normcdf}
%N(x)=\frac{1}{\sqrt{2\pi}}\int_{-\infty}^x e^{-\frac{u^2}{2}}\, du,
%\end{equation}
%from which it follows that the selection of $\dom$ guarantees
%$P(B_t\in \R\setminus \dom )<\epsilon$ for $t\in [0,2]$.
%
%
%In this numerical example, we take the terminal time $T=2$. We further pick $\epsilon=0.005$ and solve the problem on $\dom= (-4,4)$. Recall that for $p\in [0,1]$, the $p$-th percentile of $B_t$ is $N^{-1}(p)\sqrt{t}$, where $N^{-1}$ is the inverse function of the standard normal distribution function
%\begin{equation}\label{eq:normcdf}
%N(x)=\frac{1}{\sqrt{2\pi}}\int_{-\infty}^x e^{-\frac{u^2}{2}}\, du,
%\end{equation}
%from which it follows that the selection of $\dom$ guarantees
%$P(B_t\in \R\setminus \dom )<\epsilon$ for $t\in [0,2]$.

From the analytic solution, we know the exact value of $(Y_0,Z_0)=(1,\frac{\pi}{2})$.
%The performance of our scheme is then analyzed in terms of the relative error $E_h(v)$, $\frac{|\hat{Y}_0-Y_0|}{|Y_0|}$ and $\frac{|\hat{Z}_0-Z_0|}{|Z_0|}$ of $v_h$, $\hat{Y}_0$ and $\hat{Z}_0$ respectively. Computational results with different mesh sizes are shown in Table \ref{table:ex1}, illustrating a first-order convergence for all three terms.
%
The relative $L^2$-norm errors
%$E_h(v)$, $\frac{|\hat{Y}_0-Y_0|}{|Y_0|}$ and $\frac{|\hat{Z}_0-Z_0|}{|Z_0|}$
of $v_h$, $\hat{Y}_0$ and $\hat{Z}_0$ respectively
with different mesh sizes are shown in Table \ref{table:ex1}, illustrating a first-order convergence of our scheme. Figure \ref{fig:valuefunctions} (left) presents the numerical solution to \eqref{eq:ex1} with  mesh size $h=0.01$, which clearly captures the periodic behaviour of the exact solution.

\begin{tablehere}
\centering
\begin{tabular}{||l|c|c|c|c|c|c||}\hline
 %& \multicolumn{6}{|c|}{Relative errors of $v_h$,$\hat{Y}_0$ and $\hat{Z}_0$ } \\\hline
$h$ &  0.08 & 0.04 & 0.02 &0.01 & 0.005 & 0.0025 \\ \hline
 $v_h$&    0.1305  &  0.0654  &  0.0327 &   0.0163  &  0.0082  &  0.0041\\\hline
 $\hat{Y}_0$ &  0.1445  &  0.0714  &  0.0355  &  0.0177 &   0.0088 &   0.0044 \\\hline
$\hat{Z}_0$ &0.0649   & 0.0346  &  0.0178 &   0.0090 &   0.0046 &   0.0023 \\\hline
\end{tabular}

\caption{Relative errors of the value function $v$, $Y_0$ and $Z_0$ in Example 5.1.}
\label{table:ex1}
\end{tablehere}

 %Figure \ref{fig:valuefunctions} (left) presents the numerical solution to \eqref{eq:ex1} with  mesh size $h=0.01$, which clearly captures the periodic behaviour of the exact solution.

%\begin{figurehere}
%    \centering
%    \includegraphics[width=3in]{sin_lip.eps}%
%    \caption{Numerical solution $v_h$ in Example 5.1.}
%    \label{fig:example1}
% \end{figurehere}

\textbf{Example 2}.
This example considers a FBSDE that models an option under a market
with a borrowing interest rate $R$ and a (possibly different) return rate $r$ of the bond \cite{zhao2006new}
and three different sets of parameters and terminal conditions:
\begin{equation}\label{eq:ex2}
\begin{aligncases}
dX_t&=\mu X_td\tau+\sigma X_tdB_t,\q X_0=x_0,\\dY_t&=\big(rY_t+\frac{\mu-r}{\sigma} Z_t+(R-r)\min(Y_t-\frac{Z_t}{\sigma},0)\big) dt+Z_t dB_t.
\end{aligncases}
\end{equation}
%
%Given $\epsilon>0$, we perform the computation by first choosing a domain $\dom \subset \R$. A straightforward calculation gives us that for $p\in [0,1]$, the $p$-th percentile of $X_t$ is $l(t,p)=x_0e^{\hat{u}t+N^{-1}(p)\sigma\sqrt{t}}$, where $\hat{\mu}=\mu-\frac{1}{2}\sigma^2$, and $N^{-1}$ is defined as the inverse of \eqref{eq:normcdf}. Suppose $\hat{u}>0$, which is satisfied by the following chosen parameters, along with the fact that $N^{-1}(p)>0$ if $p>0.5$ and $N^{-1}(p)<0$ if $p<0.5$, we can then directly derive that
%\begin{align*}
%\min_{0\le t\le T} l(t,\epsilon)=l(T,\epsilon), \q \textnormal{and}\q
%\max_{0\le t\le T} l(t,1-\epsilon)=l(T,1-\epsilon),
%\end{align*}
%for a given small enough $\epsilon$. In other words, to select a domain $\dom$ satisfying \eqref{eq:domcond}, it suffices to look at the distribution of $X_t$ at the terminal time $T$ and set $\dom=[l(T,\epsilon),l(T,1-\epsilon)]$.
%%We further take a slightly larger domain $\hat{\dom}$ to perform our actual computations in order to enhance the accuracy of our numerical solution on $\dom$. The time domain $[0,T]$ and spatial domain $\hat{\dom}$ are both partitioned uniformly with mesh size $\size{t}=h$.
%Next we shall proceed to study \eqref{eq:ex2} with three different sets of parameters and terminal conditions.

\textbf{Example 2.1:} straddle with same interest rate, terminal condition  $Y_T=|X_T-K|$
and the following parameters \cite{bender2005forward}:

\begin{tablehere}
\centering
\begin{tabular}{||c|c|c|c|c|c|c||}\hline
$\mu$ &  $\sigma$ & $r$ & $R$ & $T$ & $x_0$ & $K$ \\ \hline
0.05 & 0.2 & 0.01& 0.01& 2& 1& 1\\\hline
\end{tabular}
\caption{Problem parameters  in Example 2.1.}
\end{tablehere}

%This is the linear case where borrowing and investing in the market have the same interest rate.
The analytic solution to the BSDE \eqref{eq:ex2}
%with this terminal condition
is given by the Black-Scholes formula:
\begin{equation}\label{eq:bsstradle}
\begin{aligncases}
Y_t&=v(t,X_t)=X_t\big(2N(d_1)-1\big)-Ke^{-r(T-t)}\big(2N(d_2)-1),\\
Z_t&=\sigma X_t \p_x v(v,X_t)=\sigma X_t\big(2N(d_1)-1\big),\\
d_1&=\dfrac{\log(X_t/K)+(r+\frac{1}{2}\sigma^2)(T-t)}{\sigma\sqrt{T-t}},
\quad d_2=d_1-\sigma\sqrt{T-t}.
\end{aligncases}
\end{equation}

We  take the computational domain $\dom=(0,3.5)$.
%As mentioned above, to guarantee the accuracy of our numerical solution in $\dom$, we carry out our computations on a larger domain $\hat{\dom}=(0,6)$.
Table \ref{table:ex2straddlev} lists the relative errors of $v_h$ with different mesh sizes,
which show a first-order convergence of our scheme.

\begin{tablehere}
\centering
\begin{tabular}{||l|c|c|c|c|c|c||}\hline
 %& \multicolumn{5}{|c|}{Relative error of $v_h$} \\\hline
$h$ &  0.08 & 0.04 & 0.02 &0.01 & 0.005  \\ \hline
$v_h$ &   0.0377  &  0.0190 &   0.0096   & 0.0048  &  0.0025\\\hline
 \end{tabular}
\caption{Relative errors of the value function $v$ in Example 2.1.}
\label{table:ex2straddlev}
\end{tablehere}

We further study the accuracy of the numerical solution $(\hat{Y}_0,\hat{Z}_0)$, where $\hat{Y}_0$ approximates the current fair price of the staddle. From the analytic solution \eqref{eq:bsstradle}, we obtain the exact value $(Y_0,Z_0)=(0.2233,0.0336)$. We then demonstrate the convergence of the relative errors in Table \ref{table:ex2straddley_0} and plot the numerical value function with mesh size $h=0.01$ in Figure \ref{fig:valuefunctions} (middle).
 %of $\hat{Y}_0$ and $\hat{Z}_0$.

\begin{tablehere}
\centering
\begin{tabular}{||l|c|c|c|c|c|c||}\hline
% & \multicolumn{6}{|c|}{Relative errors of $(\hat{Y}_0,\hat{Z}_0)$} \\\hline
$h$ &   0.04 & 0.02 &0.01 & 0.005 & 0.0025 &0.00125\\ \hline
$\hat{Y}_0$ &   0.0112  &  0.0064  &  0.0034  &  0.0019  &  0.0011  &  0.0006\\\hline
$\hat{Z}_0$ &0.3750   & 0.1875  &  0.0923 &   0.0476  &  0.0238 &   0.0119 \\\hline
\end{tabular}
\caption{Relative errors of $(\hat{Y}_0,\hat{Z}_0)$ in Example 2.1.}
\label{table:ex2straddley_0}
\end{tablehere}

%Figure \ref{fig:valuefunctions} (middle) shows the profile of our approximation to the value function $v$ in \eqref{eq:bsstradle} with mesh size $h=0.01$. In view of the above convergence results, we are confident that our numerical method will deliver a good reconstruction of the exact solution. It is seen much more clearly in Figure \ref{fig:straddle} (right), which compares our numerical solution to the exact solution at the initial time.
%
%\begin{figurehere}
%    \centering
%
%    \includegraphics[width=3in]{straddle_squareaxis.eps}
%    \includegraphics[width=3in]{straddle_0.eps}
%
%    \caption{Numerical solution $v_h$ for the value function $v$ in \eqref{eq:bsstradle}.}
%    \label{fig:straddle}
% \end{figurehere}
%
%\ms
\textbf{Example 2.2:} Call option with different interest rates, terminal condition  $Y_T=\max(X_T-K,0)$
and the following parameters :
%\textbf{Call option with different interest rates}. We then consider \eqref{eq:ex2} with the terminal condition $Y_T=\max(X_T-K,0)$,
%which models the price of a call option. We solve the problem with the following parameters:

%\begin{table}[hp]
%\centering
%\begin{tabular}{||c|c|c|c|c|c|c||}\hline
%$\mu$ &  $\sigma$ & $r$ & $R$ & $T$ & $x_0$ & $K$ \\ \hline
%0.06 & 0.2 & 0.04& 0.06& 2& 1& 1\\\hline
%\end{tabular}
%%\caption{Problem parameters in Example 2.2.}
%\label{table:paracall}
%\end{table}

\begin{tablehere}
\centering
\begin{tabular}{||c|c|c|c|c|c|c||}\hline
$\mu$ &  $\sigma$ & $r$ & $R$ & $T$ & $x_0$ & $K$ \\ \hline
0.06 & 0.2 & 0.04& 0.06& 2& 1& 1\\\hline
\end{tabular}
\caption{Problem parameters in Example 2.2.}
\label{table:paracall}
\end{tablehere}

By the replicating strategy of a call option, we know the analytic solution is given by the Black-Scholes formula evaluated with the interest rate $R$ \cite{gobet2005regression}:
\begin{equation}\label{eq:bscall}
\begin{aligncases}
Y_t&=v(t,X_t)=X_tN(d_1)-Ke^{-R(T-t)}N(d_2),\quad &&
Z_t=\sigma X_t \p_x v(t,X_t)=X_tN(d_1)\sigma,\\
d_1&=\frac{\log(X_t/K)+(R+\frac{1}{2}\sigma^2)(T-t)}{\sigma\sqrt{T-t}},\quad &&
d_2=d_1-\sigma\sqrt{T-t}.
\end{aligncases}
\end{equation}

This is a nonlinear problem with a known analytic solution. we shall take the computational domain $\dom=(0,4)$ to test our scheme.
%We remark that this is a nonlinear problem, but nevertheless, since we know the exact solution, it becomes a good test for our scheme.  We take the computational domain $\dom=(0,2)$. %and a larger computation domain $\hat{\dom}=(0,4)$.
Table \ref{table:ex2callv} lists the relative errors of $v_h$ with respect to different mesh sizes, and indicates that for a non-smooth driver $g$ and terminal condition $\phi$, the first-order convergence rate of $v_h$ may not be guaranteed.

\begin{tablehere}
\centering
\begin{tabular}{||l|c|c|c|c|c|c||}\hline
% & \multicolumn{5}{|c|}{Relative error of $v_h$} \\\hline
$h$ &  0.08 & 0.04 & 0.02 &0.01 & 0.005  \\ \hline
$v_h$ &   0.0335  &  0.0171 &   0.0093  &  0.0059 &   0.0047\\\hline
 \end{tabular}
\caption{Relative errors of the value function $v$ in Example 2.2.}
\label{table:ex2callv}
\end{tablehere}

We then investigate the accuracy of the numerical solution of the current fair price. We infer from \eqref{eq:bscall} directly that the exact value of $(Y_0,Z_0)$ is $(0.1720,0.1428)$.  Table \ref{table:ex2cally_0} lists their relative errors and indicates a first-order convergence of both terms. Figure \ref{fig:valuefunctions} (right) plots the numerical value function  with mesh size $h=0.01$.

\medskip
\begin{tablehere}
\centering
\begin{tabular}{||l|c|c|c|c|c|c||}\hline
 %& \multicolumn{6}{|c|}{Relative errors of $(\hat{Y}_0,\hat{Z}_0)$} \\\hline
$h$ &   0.04 & 0.02 &0.01 & 0.005 & 0.0025 &0.00125\\ \hline
$\hat{Y}_0$ &    0.0819  &  0.0413  &  0.0208 &   0.0105 &   0.0053 &   0.0027\\\hline
$\hat{Z}_0$ &0.0581 &   0.0280 &   0.0140  &  0.0070  &  0.0035 &   0.0015 \\\hline
\end{tabular}
\caption{Relative errors of $(\hat{Y}_0,\hat{Z}_0)$ in Example 2.2.}
\label{table:ex2cally_0}
\end{tablehere}

%\color{blue}
We finally examine the impact of the computational domain on  the numerical solutions. 
We perform our computations with mesh size $h=0.00125$ on the domain $(0,M)$ with different $M$, and list the relative errors of numerical solutions in Table \ref{table:ex2cally_0diffM}, which illustrates that for the numerical examples presented above, the computational domain  $(0,4)$ is sufficiently large and the errors caused by the domain truncation are almost neglectable. 

\medskip
\begin{tablehere}
\centering
\begin{tabular}{||l|c|c|c|c||}\hline
 %& \multicolumn{6}{|c|}{Relative errors of $(\hat{Y}_0,\hat{Z}_0)$} \\\hline
$M$ &  3 & 4 & 5 & 6 \\ \hline
$\hat{Y}_0$ &    0.003040  &  0.002732  &  0.002728 &   0.002728    \\\hline
$\hat{Z}_0$ &0.002561 &   0.001502 &   0.001484  &  0.001484    \\\hline
\end{tabular}
\caption{Relative errors of $(\hat{Y}_0,\hat{Z}_0)$ for different computational domains in Example 2.2.}
\label{table:ex2cally_0diffM}
\end{tablehere}

\color{black}

%Figure \ref{fig:call} (left) plots our approximation to the value function $v$ in \eqref{eq:bscall} with  mesh size $h=0.01$, while  Figure \ref{fig:call} (right) compares our numerical solution to the exact solution at $t=0$. Graphically, there is no observable difference between the computational values to the reference values, which is consistent with the convergence results shown in Table \ref{table:ex2cally_0}.

\begin{figurehere}
    \centering
    \includegraphics[width=5cm,height=4cm]{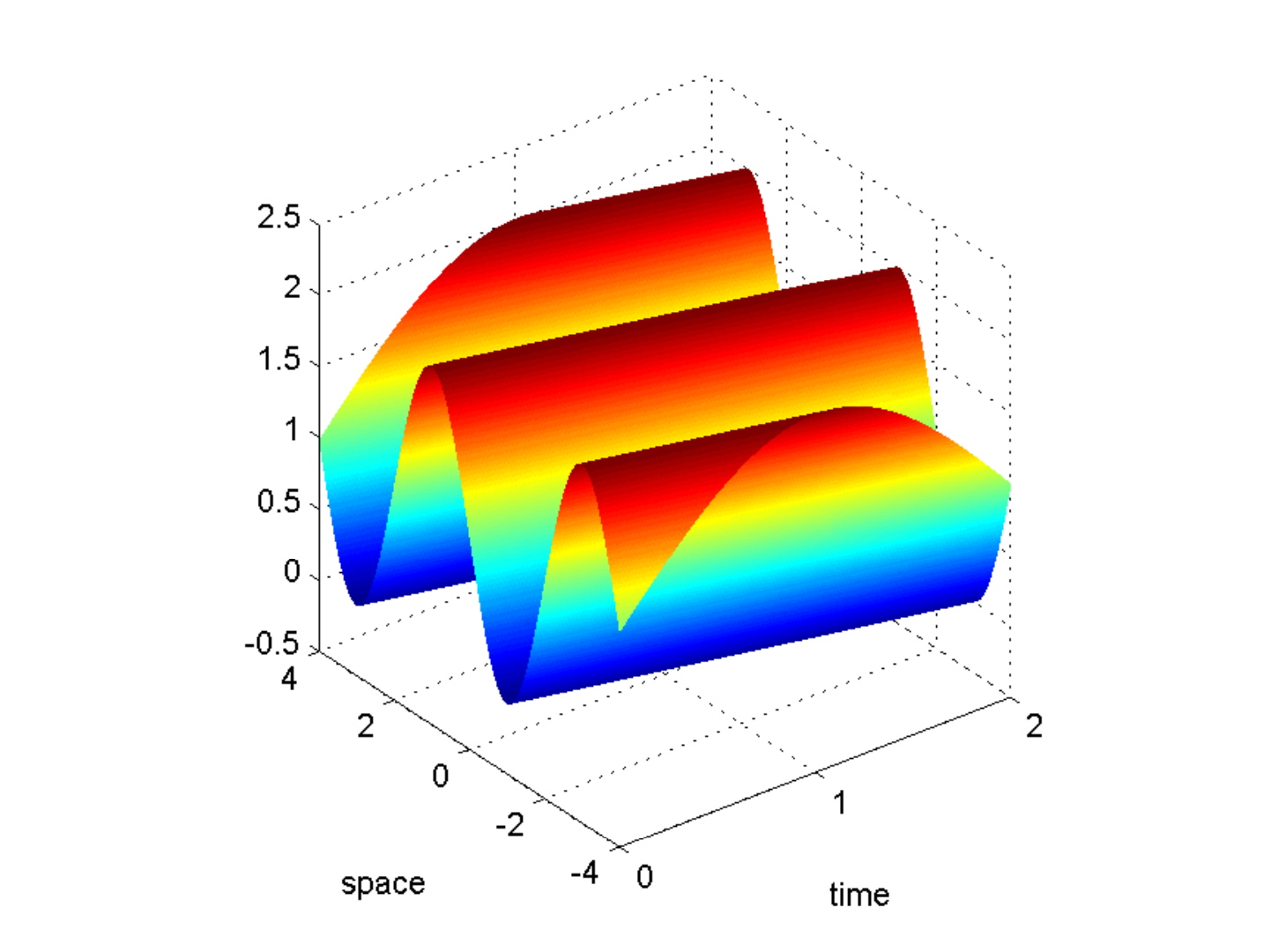}
    \includegraphics[width=5cm,height=4cm]{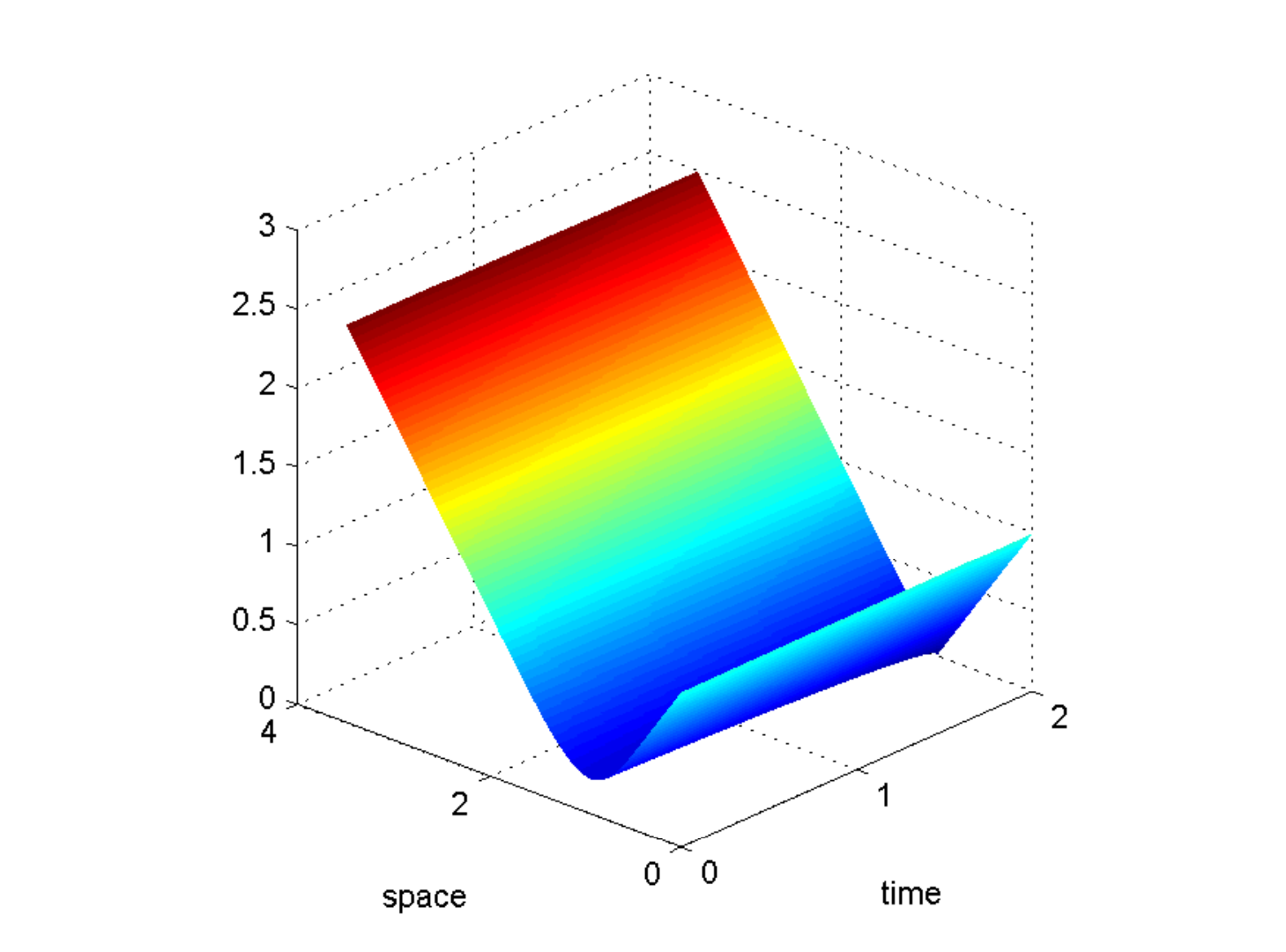}
    \includegraphics[width=5cm,height=4cm]{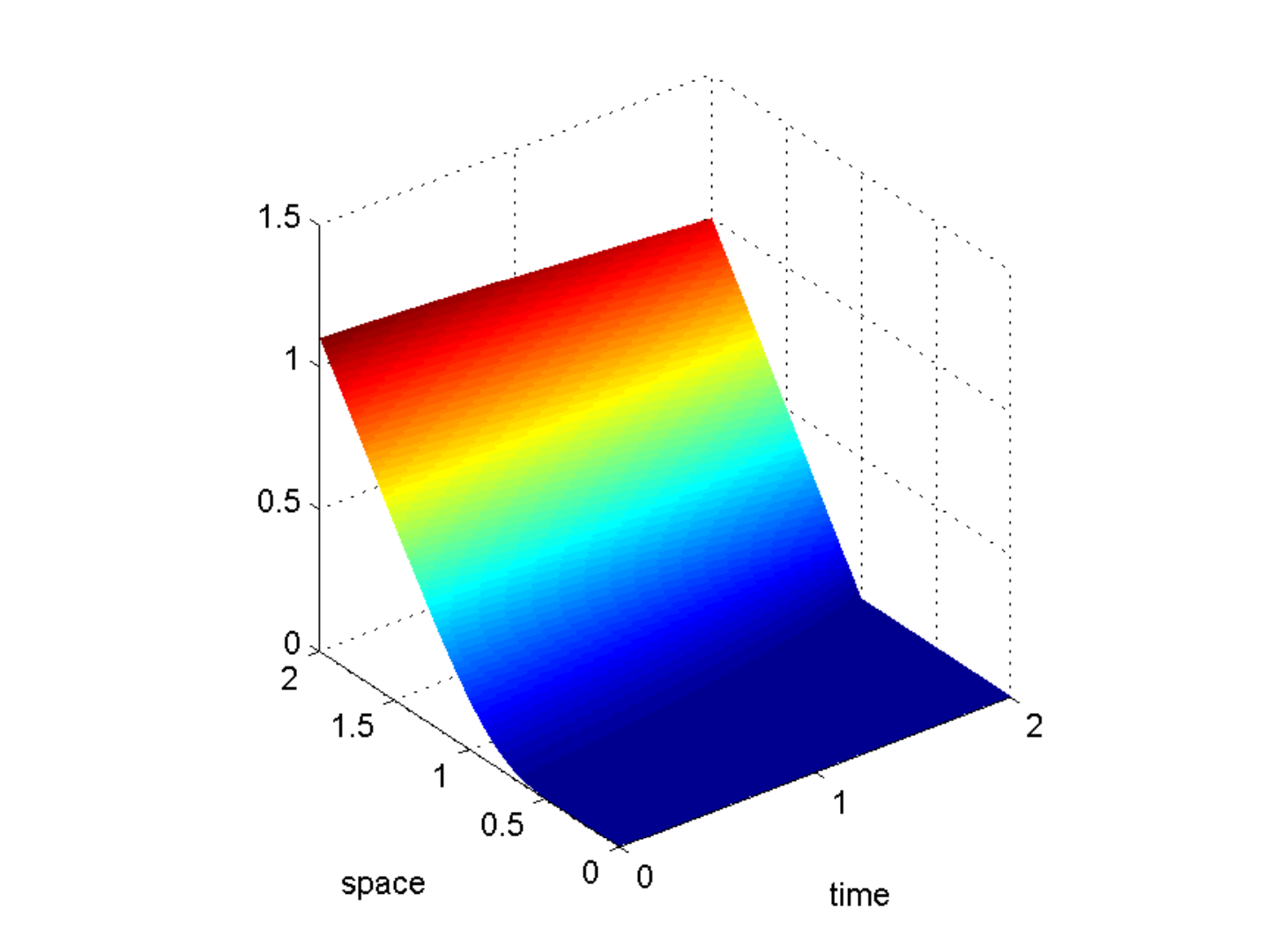}
    \caption{Numerical results of  the value functions in Examples 1, 2.1 and 2.2 from left to right.}
    \label{fig:valuefunctions}
 \end{figurehere}

\ms
\textbf{Example 2.3:} Call combination with different interest rates, terminal condition  $Y_T=\max(X_T-K_1,0)-2\max(X_T-K_2,0)$
and the following parameters :

\begin{tablehere}
\centering
\begin{tabular}{||c|c|c|c|c|c|c|c||}\hline
$\mu$ &  $\sigma$ & $r$ & $R$ & $T$ & $x_0$ & $K_1$ &$K_2$\\ \hline
0.06 & 0.2 & 0.04& 0.06& 2& 1& 0.95&1.05\\\hline
\end{tabular}
\caption{Problem parameters in Example 2.3.}
\label{table:paracombinedcall}
\end{tablehere}

The solution to this example is not provided by the Black-Scholes formula. The nonlinearity of the driver $g$ has a real impact on the value function $v$ and also $Y_0$. A reference price  suggested in \cite{gobet2005regression} is $Y_0=0.0295$. We shall price this option with the computational domain $(0,4)$.
Table \ref{table:ex2combinedcall} contains the numerical solutions $\hat{Y_0}$ with respect to different mesh sizes, and illustrates a good agreement between our results and the reference price.
 %As can be seen, there is a good agreement between our results and the reference price.
%We further plot the numerical solution to the associated PDE using  mesh size $h=0.01$ in Figure \ref{fig:combinedcall}.

\begin{tablehere}
\centering
\begin{tabular}{||l|c|c|c|c|c|c||}\hline
 %& \multicolumn{6}{|c|}{Approximated value of $Y_0$} \\\hline
$h$ &   1/32&1/64&1/128&1/256&1/512 &1/1024\\ \hline
$\hat{Y}_0$    & 0.02849 & 0.02920 & 0.02943 & 0.02951 & 0.02955 &0.02956 \\\hline
 \end{tabular}
\caption{Numerical prices of the call combination.}
\label{table:ex2combinedcall}
\end{tablehere}

%\begin{figurehere}
%    \centering
%    \includegraphics[width=3in]{combinedcacll_squareaxis.eps}
%    \caption{Numerical solution $v_h$ for a combination of calls with different interest rates}
%    \label{fig:combinedcall}
% \end{figurehere}

\textbf{Example 3}.
We end this section by considering a nonlinear FBSDE whose driver $g$ is locally Lipschitz continuous and maximal monotone on the component $Y$:
$$\begin{aligncases}
dX_t&=dB_t,\q X_0=0,\\
dY_t&=\bigg(Y_t^3-\frac{\pi^2}{8}(Y_t-1)+Z_t-\big(\sin(\frac{\pi}{2}(B_t+t))+1\big)^3 \bigg) dt+Z_t dB_t,\\
Y_T&=\sin(\frac{\pi}{2}(B_T+T))+1.
\end{aligncases}$$

The corresponding semilinear parabolic PDE \eqref{eq:semilinear} is given by $v(T,x)=\sin(\frac{\pi}{2}(x+t))+1$ and
$$
v_t+\frac{1}{2}v_{xx}=v^3-\frac{\pi^2}{8}(v-1)+v_x-\big(\sin(\frac{\pi}{2}(x+t))+1\big)^3,\q (t,x)\in (0,T)\times \R,$$
whose analytic solution is the same as that to \eqref{eq:ex1}.

%We remark that the driver $g$ in this example is locally Lipschitz continuous and maximal monotone in the component $Y$, i.e., $g$ satisfies the conditions \eqref{eq:mono} and\eqref{eq:range}. It can be easily checked that the analytic solution to the Cauchy problem is given by
%$$v(t,x)=\sin(\frac{\pi}{2}(x+t))+1,\quad \fa (t,x)\in [0,T]\times \R,$$
%from which, we obtain  by taking $t=0$ that the exact solution of $(Y_0,Z_0)=(1,\frac{\pi}{2})$.
We take the computational domain $\dom= (-4,4)$. Table \ref{table:ex3} contains the relative $L^2$-norm errors of $v_h$, $\hat{Y}_0$ and $\hat{Z}_0$ respectively with respect to different mesh sizes, which indicates a first-order convergence of our method.

\begin{tablehere}
\centering
\begin{tabular}{||l|c|c|c|c|c|c||}\hline
 %& \multicolumn{6}{|c|}{Relative errors of $v_h$, $\hat{Y}_0$ and $\hat{Z}_0$} \\\hline
$h$ &   0.04 & 0.02 &0.01 & 0.005 & 0.0025 &0.00125 \\ \hline
$v_h$ & 0.09303 & 0.04323 & 0.02082 & 0.01022 & 0.005060 & 0.002518 \\\hline
$\hat{Y}_0$ &   0.1240  &  0.0571  &  0.0274  &  0.0135  &  0.0067 &   0.0033\\\hline
$\hat{Z}_0$ & 0.1429   & 0.0646   & 0.0308   & 0.0150   & 0.0075  &  0.0037\\\hline
\end{tabular}
\caption{Relative errors of the value function $v$, $Y_0$ and $Z_0$ in Example 5.3.}
\label{table:ex3}
\end{tablehere}

%Figure \ref{fig:example3} presents the profile of our numerical solution with  mesh size $h=0.01$, which captures the periodic behaviour of the exact solution.
%
%\begin{figurehere}
%    \centering
%    \includegraphics[width=3in]{sin_mono.eps}
%    \caption{Numerical solution $v_h$ for Example 5.3.}
%    \label{fig:example3}
% \end{figurehere}

\medskip

\end{document}